\providecommand{\U}[1]{\protect\rule{.1in}{.1in}}
\providecommand{\U}[1]{\protect\rule{.1in}{.1in}}
\theoremstyle{plain}
\newtheorem{thm}{Theorem}
\newtheorem{proposition}{Proposition}
\newtheorem{corollary}{Corollary}
\newtheorem{assumption}{Assumption}
\newtheorem{remark}{Remark}
\newtheorem{definition}{Definition}
\renewenvironment{proof}[1][Proof]{\noindent\textbf{#1.} }{\ \rule{0.5em}{0.5em}}
\begin{document}
\title{Perfect Sampling of Hawkes Processes and Queues with Hawkes Arrivals}
\date{}
\author{Xinyun Chen\\
	Institute for Data and Decision Analytics\\
	Chinese University of Hong Kong, Shenzhen\\
	 Email: \texttt{chenxinyun@cuhk.edu.cn}}
\maketitle
\begin{abstract}
	In this paper we develop the first perfect sampling algorithm for queues with Hawkes input, i.e. single-server queues with Hawkes arrivals and i.i.d. service times of general distribution. In addition to the stability condition, we also assume the excitation function of the Hawkes process has a light tail and the service time has finite moment generating function in the neighborhood of the origin. In this procedure, we also propose a new perfect sampling algorithm for Hawkes process with improved computational efficiency compared to the existing algorithm. Theoretical analysis and numerical tests on the algorithms' correctness and efficiency are also included.
\end{abstract}
\section{Introduction}\label{sec: intro}
Many stochastic systems have arrival processes that exhibit clustering or self-exciting behavior, i.e.  an arrival will increase the possibility of new arrivals.  As a natural extension of the classic Poisson process, Hawkes processes are used widely to model arrivals with self-excitement. Examples include order flows in stock market (\cite{Abergel_SIAM_2015}, \cite{trade}), risk events in financial systems (\cite{finance}, \cite{portfolio}) and social network events (\cite{nips}, \cite{tweet}). 

To study the impact of self-excitement on the performance of stochastic systems, several papers have analyzed queueing dynamics with customer arrivals following a Hawkes, or other type of self-exiting process. \cite{Gao_QS_2018} studies the heavy-traffic limit of infinite-server queue with Hawkes arrivals. \cite{Pender_SS_2018} and \cite{Koops_JAP_2018} provide analytic solution to the transient and steady-state moments for different infinite-server systems with Hawkes arrivals. In particular, \cite{Pender_SS_2018} studies the systems with Markovian Hawkes arrivals and phase-type/deterministic service times, while \cite{Koops_JAP_2018} studies the cases with non-Markovian Hawkes arrivals and exponential service times. In addition, \cite{Koops_QS_2017} studies an infinite-server queue with shot-noise arrivals. \cite{Pender_wp_2018} proposes a so-called Queue-Hawkes model that combining Hawkes process with an infinite-server queue to capture ephemeral self-exciting behaviors. To the best of our knowledge, analytic results on queueing processes with Hawkes arrivals are only available for infinite-server systems in the literature. Due to the dependence between customer arrivals and sojourn times, it is difficult to obtain analytic results even for the most simple single-server queue with Hawkes arrivals (see, for instance, the discussion on page 941 of \cite{Koops_JAP_2018}). 

In this paper, we apply simulation techniques to numerically compute the steady state of queues with Hawkes input. In detail, we develop the a perfect sampling algorithm that can generate i.i.d. samples exactly from the steady-state distribution of  Hawkes/GI/1 queues. Our algorithm is applicable to a variety of queueing models with Hawkes arrivals. In detail, we assume the arrival process is a linear Hawkes process, which covers both Markovian and non-Markovian Hawkes process as studied in \cite{Koops_JAP_2018} and \cite{Pender_wp_2018}, and the service times are i.i.d. following a general continuous distribution.

Our algorithm is closely related to the literature on perfect sampling of queueing models, for instance \cite{BlanchetChen_2015}, \cite{BlanchetChen_MOR_2019}, \cite{Dong}, \cite{Bouilard2014}, \cite{EnsorGlynn2000},  and \cite{Xiong2015}, to name but a few. Most of the existing works have been focused on queues with arrivals modeled as Poisson or renewal processes. In those cases, the steady-state waiting time can be related to the maximum or the running maximum of a (possibly multi-dimensional) random walk, see the discussion on page 378 of \cite{BlanchetChen_MOR_2019}. The running maximum of random walk is defined as the maximum from a positive time $n$ to infinity, for any $n$.  In our case, the steady-state waiting time of Hawkes/GI/1 queue can still be represented as $\max_{n\geq 0}R(n)$ for certain stochastic process $R(n)$. However, $R(n)$ is defined by a stationary version of the Hawkes arrivals and its increments have sequential dependence. Therefore, existing perfect sampling algorithms for queueing models can not be directly applied to queues with Hawkes arrivals. 

There are two key steps in our algorithm. First, to simulate the process $R(n)$ involves generating a stationary sample path of the Hawkes arrivals, namely, perfect sampling of Hawkes process,  and therefore is far from trivial. 
The only existing perfect sampling algorithm for Hawkes processes in the literature is developed in \cite{Rasmussen}. However, the complexity of the algorithm, in terms of the expected total number of random seeds generated, is infinite (see Propostion \ref{prop: rasmussen}). Using importance sampling techniques, we proposed a new, and probably the first perfect sampling algorithm for Hawkes processes that has finite expected termination time. In particular, the  complexity of our new algorithm is finite and has an explicit expression in terms of model and algorithm parameters. We believe this new algorithm can be applied to, in addition to the single-server queue studied in the current paper, other stochastic models that involve Hawkes processes as we mentioned previously. 

Once the stationary Hawkes process and $R(n)$ are simulated, the next key step of our algorithm is to find out $\max_{n\geq 0}R(n)$. The main idea is to construct a random walk $\tilde{R}(m)$ coupled with the Hawkes/GI/1 queue such that its running maximum $\max_{m\geq n}\tilde{R}(m)$ dominates the process $R(n)$ in a proper way. Then, we apply the techniques dealing with the running maximum of random walks, as developed in \cite{BlanchetChen_MOR_2019}, to simulate $\max_{n\geq 0}R(n)$ jointly with $\max_{m\geq n}\tilde{R}(m)$, which completes our algorithm as $\max_{n\geq 0}R(n)$ equals in distribution to the steady-state waiting time.

The rest in the paper is organized as follows. We first introduce the definition of Hawkes process, the Hawkes/GI/1 queue model and the technical assumptions in Section \ref{sec: model}. In Section \ref{sec: algorithm}, we introduce our perfect sampling algorithms for Hawkes processes and Hawkes/GI/1 queues, along with the main results of the paper. In Section \ref{sec: numerical}, we implement the algorithms and report the numerical experiment results. Finally, Section \ref{sec: conclusion} concludes the paper with a brief discussion on future research directions. Proofs of some technical results are included in the Appendices.

\section{Model and Assumptions}\label{sec: model}
\subsection{Hawkes Process}\label{subsec: Hawkes}
Following \cite{Koops_JAP_2018}, we provide two equivalent definitions for Hawkes process, namely the \textit{conditional intensity} and \textit{cluster representation} definitions. The conditional intensity definition clearly demonstrates self-excitement of Hawkes arrivals while the cluster representation provides an alternative probabilistic construction of Hawkes process, which will be used in our simulation algorithm.

\begin{definition}\label{def: Hawkes intensity} (Conditional Intensity) A Hawkes process is a counting process $N(t)$ that satisfies 
	\begin{equation*}
	P(N(t+\Delta t)-N(t)=m|\mathcal{F}(t))=
	\begin{cases}
	\lambda(t)\Delta t + o(\Delta t), & m=1\\
	o(\Delta t),& m>1\\
	1-\lambda(t)\Delta t + o(\Delta t)& m=0,
	\end{cases}
	\end{equation*}
	as $\Delta t\to 0$, where $\mathcal{F}(t)$ is the associated filtration and $\lambda$ is called the conditional intensity such that
	\begin{equation}\label{eq: Hawkes intensity}
	\lambda(t) = \lambda_0 + \sum_{i=-\infty}^{N(t)}h(t-t_i),
	\end{equation} 
	where $t_1, t_2$,... are the arrival times, the constant $\lambda_0 >0$ is called the \textit{background intensity} and the function $h:\mathbb{R}_+\to\mathbb{R}_+$ is called \textit{excitation function}.
\end{definition}

According to \eqref{eq: Hawkes intensity}, an arrival will increase the future intensity function and the possibility of new arrivals, so arrivals of a Hawkes process are self-exciting. 
Now we introduce another equivalent definition (or construction) of Hawkes process which represents it as a branching process with immigration.
\begin{definition}\label{def: Hawkes cluster}(Cluster Representation)  Consider a (possibly infinite) $T\geq 0$ and define a sequence of events $\{t_n\leq T\}$ according to the following procedure:
	\begin{enumerate}
		\item A set of immigrant events $\{\tau_m\leq T\}$  arrive  according to a Poisson process with rate $\lambda_0$ on $[0,T]$.
		\item For each immigrant event $\tau_m$, define a cluster $C_m$, which is a set of events, as follows. Each event $\in C_m$ is indexed by $k\geq 1$ and is represented by a tuple $(k, t_m^k,pa_m^k)$, where $t_m^k$ is the event's arrival time and $pa_m^k\geq 0$ is the index of its parent event. Following this representation, we denote the immigrant event as $(1,\tau_m,0)$. 
		\item The cluster $C_m$ is generated following a branching process. Initialize $k=1$, $C_m=\{(1,\tau_m,0)\}$.
		For event $(k,t_m^k,pa_m^k)\in C_m$, let $n=|C_m|$, namely cardinality of set $C_m$, generate a sequence of next-generation events $(n+1,t_m^{n+1},k),...,(n+\Lambda, t_m^{n+\Lambda},k)$ where $t_m^{n+1},...,t_m^{n+\Lambda}$ follow a non-homogeneous Poisson process on $[t_{m}^k,T]$ with rate function $\lambda(t)=h(t-t_{m}^k)$. Update $k\leftarrow k+1$ and add the newly generated events into set $C_m$. Repeat the above iteration until no more events are generated.
		\item Collect the sequence of events $\{t_n\} = \cup_m \{t_m^k: k=1,..., |C_m|\}$.
	\end{enumerate}
	Then, the counting process $N(t)$ corresponding to the event sequence $\{t_n\}$ is equivalent to the Hawkes process defined by conditional intensity function \eqref{eq: Hawkes intensity}.
	
\end{definition} 

For each cluster $C_m$, we define a non-decreasing function $S_m(t)=|\{t_{m}^n: \tau_m\leq t_m^n\leq t+\tau_m\}|$, i.e. the number of events in $C_m$ that arrive on interval $[\tau_m,\tau_m+t]$ for $t\in\mathbb{R}$. By definition, for each $m$, $S_m(t)$ is an increasing function such that $S_m(t)\equiv 0$ for $t<0$, $S_m(0)=1$ and $S_m(\infty) = |C_m|$. Then, the Hawkes counting process $N(t)=\sum_{m=1}^\infty S_m(t-\tau_m)$. We define for each cluster $C_m$ its \textit{cluster length} $L_m$ as length of the time interval between the immigrant event and the last event in this cluster, i.e.
\begin{equation}\label{eq: L}
L_m \triangleq \max_{k}t_m^k - \tau_m.
\end{equation}  
We call $\tau_m$ and $\delta_m\triangleq \max_k t_m^k$  the \textit{arrival time}  and  \textit{departure time} of the cluster $C_m$, respectively.

Let $h_1 = \int_0^\infty h(t)dt$. Then, according to Definition \ref{def: Hawkes cluster}, $h_1$ is the expected number of next-generation events generated by each single event. For any event $t_m^k$ that is not an immigrant,  the arrival time of its parent event is $t_m^{pa_m^k}$ following our notation. We define the  \textit{birth time} $b_m^k$ of event $t_m^k$ as 
\begin{equation}\label{eq: b}
b_m^k \triangleq t_m^k-t_m^{pa_m^k}.
\end{equation}
Following the property of non-homogeneous Poisson process, conditional on $|C_m|$, $\{b_m^k: m\geq 1, 1\leq k\leq |C_m|\}$ are i.i.d. positive random variables that follow the probability density function $f(t)\triangleq h(t)/h_1$ for $t\geq 0$. Given the clear meaning of $h_1$ and $f(\cdot)$ in the cluster representation of Hawkes process, in the rest of the paper, we shall denote by $(\lambda_0, h_1, f(\cdot))$ as the parameters that decide the distribution of a Hawkes process.    \\

\textbf{Stationary Hawkes Process } For the Hawkes process with parameters $(\lambda_0, h_1,f(\cdot))$  to be stable in long term, intuitively, each cluster should contain a finite number of events on average. Therefore, we shall impose the following stability condition on the Hawkes process throughout the paper, which is also a common assumptions in the literature (\cite{Bremaud2002}, \cite{Hawkes1974}):
$$h_1<1.$$
Under this condition, the Hawkes process has a unique stationary distribution (\cite{Bremaud2002}). Actually, we can directly construct a stationary Hawkes process using the cluster representation as follows. Note that the arrival process of the immigrant, or equivalently, the clusters, is a homogeneous Poisson process and can be extended to time interval $(-\infty, \infty)$. For this two-ended Poisson process, we index the sequence of immigrant arrival times by $\{\pm 1, \pm 2,...\}$ such that $\tau_{-1}\leq 0<\tau_{1}$ and generate the clusters $\{C_{\pm m : m=1,2,...}\}$ independently for each $m$ following the procedure in Definition \ref{def: Hawkes cluster}. Then, the events that arrive after time $0$ form a stationary sample path of a Hawkes process on $[0,\infty)$, namely
$$N(t) \triangleq |\cup _{m=-\infty}^{\infty }\{(k,t_m^k,pa_m^k):k=1,2,...,|C_m|, 0\leq t_m^k\leq t\}| =\sum_{m=-\infty}^\infty (S_m(t-\tau_m)-S_m(-\tau_m)).$$
is a stationary Hawkes process.

\subsection{$Hawkes/GI/1$ Queue}\label{subsec: HG1}
We consider a single-server queue where customers arrive according to a stationary Hawkes process with parameters $(\lambda_0,h_1,f(\cdot))$. We denote by $\{U_n\}$  the corresponding sequence of inter-arrival times. Upon arrival, customers are served FIFO (first-in-first-out) and their service times $V_n$ are i.i.d. positive random variables of general distribution with probability density function $g(\cdot)$. 

Denote by $W(t)$ the \textit{virtual waiting time} process (also known as the workload process) of this single-server queue. Mathematically, $W(t)$ can be defined as a reflected process as follows. Let $R(t) \triangleq \sum_{k=1}^{N(t)} V_k -t$ for all $t\geq 0$, then we have
$$dW(t) =dR(t) + dL(t), \text{ where } L(0)=0, dL(t)\geq 0\text{ and } W(t)dL(t)=0.$$
In the setting of a single-server queue, the function $L(t)$ is nothing but the server's idle time by time $t$. A single-server queue is said to be stable if the distribution of $W(t)$ converges as $t\to\infty$. The following proposition states that the $Hawkes/GI/1$ queue is stable if and only if its service rate is higher than the stationary arrival rate of customers.
\begin{proposition}\label{prop: stable}
	The $Hawkes/GI/1$ queue is stable if and only if
	\begin{equation}\label{eq: stable} 
	\lambda_0\cdot \frac{E[V_1]}{1-h_1} < 1.
	\end{equation}
\end{proposition}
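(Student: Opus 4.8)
The plan is to represent the steady state through Loynes' construction and thereby reduce stability to a strong law for the free process $R$. By the Skorokhod reflection map, the unique solution of $dW=dR+dL$ started from $W(0)=0$ is $W(t)=\sup_{0\le s\le t}\big(R(t)-R(s)\big)$. Working on the two-sided stationary Hawkes input of Section \ref{subsec: Hawkes} and extending $R$ to $\mathbb{R}$ in the natural way, the workload at time $0$ of a queue started empty at time $-t$ equals $\sup_{0\le u\le t}Z(u)$, where $Z(u)\triangleq R(0)-R(-u)=\sum_k V_k\mathbf{1}\{-u< t_k\le 0\}-u$. This quantity is non-decreasing in $t$ (Loynes' monotonicity), hence converges a.s. to $W_\infty\triangleq\sup_{u\ge 0}Z(u)\in[0,\infty]$; and by the standard Loynes coupling the queue started from any fixed initial condition converges in distribution, to a proper limit, exactly when $W_\infty<\infty$ a.s., whereas $W(t)\Rightarrow\infty$ when $W_\infty=\infty$ a.s. So it suffices to decide finiteness of $W_\infty$.

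The key quantitative input is the long-run arrival rate. In the cluster representation each cluster is the total progeny of a branching process with mean offspring number $h_1<1$, so on average it contains $\sum_{j\ge 0}h_1^{\,j}=1/(1-h_1)$ events; since immigrant clusters form a Poisson process of rate $\lambda_0$ on $\mathbb{R}$, Campbell's formula gives $E[N(t)]=\lambda_0 t/(1-h_1)$, and because the stationary Hawkes process is a Poisson cluster process it is ergodic, so $N(t)/t\to\lambda_0/(1-h_1)$ a.s. As the $V_k$ are i.i.d.\ with finite mean and independent of the arrival times, $\tfrac1t\sum_{k=1}^{N(t)}V_k=\tfrac{N(t)}{t}\cdot\tfrac1{N(t)}\sum_{k=1}^{N(t)}V_k\to\tfrac{\lambda_0 E[V_1]}{1-h_1}$ a.s. Writing $\rho\triangleq\lambda_0 E[V_1]/(1-h_1)$, this yields
\[
\frac{R(t)}{t}\ \longrightarrow\ \rho-1 \quad\text{a.s.},\qquad\text{and likewise}\qquad \frac{Z(u)}{u}\ \longrightarrow\ \rho-1 \quad\text{a.s.}
\]
since $Z$ has the same stationary-increment structure.

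The conclusion is then routine except at criticality. If $\rho<1$, then $Z(u)\to-\infty$ a.s., so the one-sided continuous path $u\mapsto Z(u)$ attains its supremum on $[0,\infty)$ at a finite $u$; hence $W_\infty<\infty$ a.s.\ and the queue is stable. If $\rho>1$, then $Z(u)\to+\infty$ a.s., so $W_\infty=\infty$ a.s.\ and the queue is unstable. The delicate case is $\rho=1$: here $E[Z(u)]=0$ for all $u$ while $Z$ is non-degenerate, and one must show $\limsup_{u\to\infty}Z(u)=+\infty$ a.s.\ — an oscillation result of Chung–Fuchs type for processes with stationary ergodic increments and zero mean — which again forces $W_\infty=\infty$ and hence instability. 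I expect the two genuinely non-routine points to be (i) justifying ergodicity of the stationary Hawkes input (this follows from the i.i.d.\ Poisson-cluster structure, using also that $h_1<1$ makes each cluster, hence each cluster length $L_m$, finite a.s.), and (ii) the boundary case $\rho=1$, where the oscillation argument — or, as an alternative, a comparison with an embedded mean-zero process sampled along the Poisson immigration epochs and the classical recurrence of such walks — is the main obstacle.
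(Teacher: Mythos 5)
Your route is genuinely different from the paper's. The paper disposes of this in one sentence: it invokes Propositions 1.1 and 1.2 on page 267 of \cite{AsmussenBook}, which state Loynes' theorem for a single-server queue with jointly stationary $\{(U_n,V_n)\}$ — stable if and only if $E[U_1]-E[V_1]>0$ — and then only computes $E[U_1]=(1-h_1)/\lambda_0$ from the cluster representation. You instead reconstruct Loynes' argument from scratch: reflection map, two-sided stationary input, monotone coupling, and a strong law $R(t)/t\to\rho-1$ obtained via the cluster decomposition and ergodicity of the Poisson cluster process. Your version is more self-contained and makes transparent exactly where $h_1<1$ and the cluster structure enter (finite mean cluster size, ergodicity, the rate $\lambda_0/(1-h_1)$), which is pedagogically valuable; the paper's version is shorter and, importantly, inherits the full ``if and only if'' — including the boundary case $\rho=1$ — from the cited theorem. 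Your proof, as you candidly note, does not close that boundary case: the SLLN gives $Z(u)/u\to 0$ when $\rho=1$, which is compatible with $\sup_u Z(u)<\infty$, and you would need a genuine oscillation/recurrence argument for the embedded mean-zero stationary-increment process (e.g.\ an ergodic analogue of Chung–Fuchs, or the zero-mean case of Loynes' theorem itself) to conclude $W_\infty=\infty$ a.s. The ergodicity of the stationary Hawkes input that you flag is the easier of your two gaps and does follow from the i.i.d.\ Poisson-cluster structure with a.s.\ finite clusters, but it too would need to be spelled out if one wanted a fully self-contained proof. In short: correct strategy and correct computations, but the hard 10\% that the paper outsources to \cite{AsmussenBook} is exactly the part you leave open.
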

\begin{proof}[Proof of Proposition \ref{prop: stable}]
	The stability condition directly follows Propositions 1.1 and 1.2 on page 267 of \cite{AsmussenBook}. For a single-server queue with stationary inter-arrival time sequence $\{U_n\}$ and  service time sequence $\{V_n\}$, it is stable if and only if $E[U_1]-E[V_1]>0$. In our case, $E[U_1] = (1-h_1)/\lambda_0$ and therefore, $E[U_1]-E[V_1]>0$ is equivalent to $\lambda_0\cdot \frac{E[V_1]}{1-h_1} < 1$.
\end{proof}
\vskip 2ex
For the Hawkes/GI/1 queue, each arrival of the Hawkes process is associated with a service time. Therefore, we shall also include service-time information in the cluster representation of Hawkes process. In detail, we denote $C_m=\{(k,t_m^k,pa_m^k,V_m^k): k=1,2,...,|C_m|\}$ where $V_m^k$ is the service time of the $k$-th customer in cluster $C_m$.

Our goal is to simulate the steady-state virtual waiting time for the Hawkes/GI/1 queue, so we shall impose the stability condition throughout the paper. Besides, in order to carry out importance sampling procedures in the simulation algorithm, we also need some technical assumptions on  the Hawkes/GI/1 model. These assumptions are satisfied by a large class of queueing models, such as Hawkes/GI/1 queue with arrivals that are Markovian Hawkes, or non-Markovian Hawkes having excitation function with finite support, and service times that are exponential or phase-time. Now we close this section by summarizing our assumptions on the Hawkes/GI/1 queue.
\begin{assumption}\label{asp: stable}
	The stability condition \eqref{eq: stable} holds.
\end{assumption}
\begin{assumption}\label{asp: light tail}
	The birth time of the Hawkes process $b$ and the service time of customers $V$ are of continuous distribution. Besides, there exists $
	\theta_0>0$ such that 
	\begin{equation*}
	E[\exp(\theta_0 b)]=\int_0^\infty \exp(\theta_0t)f(t)dt<\infty, ~E[\exp(\theta_0 V)] = \int_0^\infty\exp(\theta_0t)g(t)dt<\infty,
	\end{equation*}
	i.e. the random variables $b$ and $V$ have finite moment generating function in a neighborhood of the origin.
\end{assumption}	
\begin{assumption}\label{asp: tilting}
	The distributions of birth time and service time can be simulated exactly, and moreover, we can simulate from
	exponential tiltings (i.e., the natural exponential family) associated with these distributions.
\end{assumption}

\section{Simulation Algorithms}\label{sec: algorithm}
Our goal is to simulate the steady-state virtual waiting time $W_\infty$ of $Hawkes/GI/1$ queue. We now construct an expression of $W_\infty$ in the same spirit of \cite{Loynes}. First, we extend the \textit{stationary} Hawkes arrival process $N(t)$ \textit{backward in time} to $(-\infty]$, and  for $t<0$, define $N(t)$ as the number of arrivals on $[t,0]$. The i.i.d. service time sequence $\{V_n\}$ can be natural extended to $n\leq -1$. Following the same argument as in Proposition 1 of \cite{BlanchetChen_2015}, we can construct the stationary distribution $W_\infty$ as
\begin{equation}\label{eq: W infty}
W_{\infty}\stackrel{d}{=} \max_{t\geq 0} R(t), \text{ with } R(t)\triangleq\sum_{m=-1}^{-N(-t)} V_m -t.
\end{equation}
According to \eqref{eq: W infty}, we can simulate $W_\infty$ in two steps
\begin{enumerate}
	\item Generate the sample path of $R(t)$.
	\item Simulate the maximum $ \max_{t\geq 0}R(t)$. 
\end{enumerate}
To simulate the process $R(t)$ in Step 1 is essentially to simulate a \textit{stationary} sample path of the Hawkes process \textit{backward in time}.  We shall explain how to do this in Section \ref{subsec: algorithm Hawkes}, in which we develop a novel and efficient perfect sampling algorithm for Hawkes process. Then we explain Step 2 in Section \ref{subsec: algorithm Q} and this completes our algorithm.

\subsection{Perfect Sampling of Hawkes Process}\label{subsec: algorithm Hawkes}
%

To the best of our knowledge, the only perfect sampling algorithm for Hawkes process  in the literature is given by \cite{Rasmussen}, which is based on the cluster representation of Hawkes process. However, this algorithm is not very efficient, and it on average needs to generate an infinite number of random numbers before termination (as we shall prove in Proposition \ref{prop: rasmussen}). In this part, we propose a novel perfect sampling algorithm for Hawkes process, whose complexity (in terms of the expected variables generated) is finite and has an explicit expression in model and algorithm parameters. Our algorithm is also based on the cluster representation, but exploits importance sampling and acceptance-rejection techniques to largely improve the simulation efficiency. 

\subsubsection{Existing Framework} As both our algorithm and \cite{Rasmussen} are based on cluster representation of Hawkes processes, we first briefly review the algorithm in \cite{Rasmussen} to introduce the outline of our algorithm. Then, we point out the major bottlenecks in their algorithm, and in Section \ref{subsubsec: Hawkes}, we provide our solutions to these bottlenecks and introduce the new algorithm.   

Recall that the arrival process of the immigrant events is a homogeneous Poisson and can be extended into a double-ended counting process $\{\tau_m\}_{m=-\infty}^\infty$. Let $M(t) =\max \{m:\tau_m\leq t\}$ be the index of the last immigrant that arrives by time $t$. The counting process $N(\cdot)$ corresponding to a stationary Hawkes process on $[0,\infty)$ can be decomposed as, for any $t>0$,
\begin{equation}
\begin{aligned}\label{eq: N0}
N(t) &=\sum_{m=-\infty}^\infty (S_m(t-\tau_m)-S_m(-\tau_m))\\
&= \sum_{m=-\infty}^{-1} \left(S_m(t-\tau_m) - S_m(-\tau_m)\right)+\sum_{m=1}^{M(t)} S_{m}(t-\tau_m)\\
&= \sum_{m\leq -1, L_m>-\tau_m}  \left(S_m(t-\tau_m) - S_m(-\tau_m)\right) +\sum_{m=1}^{M(t)} S_{m}(t-\tau_m)\\
&\triangleq ~~~~~~~~~~~~~~~~~~~N_0(t)~~~~~~~~~~~~~~~~~~~~~~~+~~~~~~N_1(t).
\end{aligned}
\end{equation}
Here $L_m$ is the cluster length as defined in \eqref{eq: L}. The second equality just says that if a cluster $C_m$ that arrives before time 0 has cluster length $L_m<-\tau_m$ (minus its arrival time), all of its events will arrived before 0, and therefore, it will have no impact on dynamic of the stationary Hawkes process after time 0. In the above decomposition, $N_1(\cdot)$ is the set of events from clusters arriving after time 0, which is basically a Poisson compound of i.i.d. clusters. To simulate $N_1(\cdot)$, we just need to simulate the arrivals of clusters according to a Poisson process of rate $\lambda_0$, and then, for each arrival, simulate a cluster independently according to Step 3 as described in Definition \ref{def: Hawkes cluster}. Therefore, the key step in perfect sampling of Hawkes process is to simulate $N_0(\cdot)$, or equivalently, to simulate all the clusters that have arrived before time 0 and last after time 0. By slightly notation abusing, we shall also use $N_0$ to denote the set of clusters that have arrived before time 0 and last after time 0, i.e., $N_0=\{C_m: m\leq -1, L_m>-\tau_n\}$

Let $p(t)=P(L_m>t)$ be the probability that a cluster last for more than $t$ units of time. Then, by Poisson thinning theorem, the clusters in $N_0$ arrive on time horizon $(-\infty,0]$ according a non-homogeneous Poisson process with intensity function $\gamma(t) = \lambda_0p(-t)$, for $t\leq 0$. Based on this observation, \cite{Rasmussen} proposes the following procedure to simulate $N_0$:\\

\noindent \textbf{Step 1:} Sample a non-homogeneous Poisson process with intensity function $\gamma(t)$ on $(-\infty,0]$ and obtain the arrivals $\{\tau_{-1},...,\tau_{-K}\}$. \\
\textbf{Step 2:} For each $k\in \{1,2,...,K\}$, repeatedly simulate a cluster sample $C$ until its cluster length $L>-\tau_k$. Set $C_{-k}=C$.\\
\textbf{Step 3:} Output $N_0=\{C_{-1}, C_{-2},...,C_{-K}\}$.\\


The above algorithm has two major bottlenecks in the design that significantly affect its computational efficiency. First, in Step 1, the function $p(\cdot)$ does not have an explicit expression, so \cite{Rasmussen} uses a while loop to approximate $p(\cdot)$ by iteration. Second, in Step 2, a naive acceptance-rejection procedure is used to obtain a cluster with cluster length $>-\tau_k$. As a consequence, the total number of cluster simulation rounds, before an acceptance occurs, could be very large when $-\tau_k$ is large. The following proposition shows that in fact, Step 2 needs to spend, on average, infinite rounds of cluster simulation before termination.

\begin{proposition}\label{prop: rasmussen}
	Let $N_C$ be the total number of clusters generated in Step 2 by naive acceptance-rejection. Then,
	$$E[N_C]=\infty.$$
\end{proposition}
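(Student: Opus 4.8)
The plan is to evaluate $E[N_C]$ by conditioning on the Poisson process generated in Step 1 and then exploiting the geometric structure of the acceptance--rejection loop in Step 2. First I would condition on the arrival times $\{\tau_{-1},\dots,\tau_{-K}\}\subset(-\infty,0]$ produced in Step 1. For a fixed index $k$, Step 2 draws i.i.d.\ cluster samples (each of which terminates and has finite length almost surely, since $h_1<1$) and stops at the first one whose length $L$ exceeds $-\tau_{-k}$; because a fresh cluster satisfies $P(L>-\tau_{-k})=p(-\tau_{-k})$, the number $G_k$ of clusters drawn for index $k$ is geometric with success parameter $p(-\tau_{-k})$, hence $E[G_k\mid\tau_{-k}]=1/p(-\tau_{-k})$. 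Since the clusters drawn for distinct indices are independent, $N_C=\sum_{k=1}^{K}G_k$ and therefore
$$E\!\left[N_C \mid K,\tau_{-1},\dots,\tau_{-K}\right]=\sum_{k=1}^{K}\frac{1}{p(-\tau_{-k})}.$$

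Next I would take the outer expectation over the non-homogeneous Poisson process of Step 1, whose intensity on $(-\infty,0]$ is $\gamma(t)=\lambda_0 p(-t)$. Using Campbell's formula (the mean-measure identity $E[\sum_{k}\phi(\tau_{-k})]=\int_{-\infty}^0\phi(t)\gamma(t)\,dt$ for nonnegative $\phi$, which needs no integrability hypothesis by Tonelli), with $\phi(t)=1/p(-t)$ this yields
$$E[N_C]=\int_{-\infty}^0\frac{1}{p(-t)}\cdot\lambda_0 p(-t)\,dt=\lambda_0\int_{-\infty}^0 dt=\infty.$$
The whole point is that the factor $1/p(-t)$ coming from the geometric mean precisely cancels the thinning probability $p(-t)$ that defines $\gamma$, leaving a constant integrand over an infinite horizon.

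Two small points remain to be checked for rigor. I would verify that $p(s)=P(L>s)>0$ for every $s>0$, so that each $G_k$ is a.s.\ finite with the stated mean and, more importantly, so that the integrand above equals $\lambda_0$ for (Lebesgue-)almost every $t<0$ rather than vanishing on a set of positive measure; this holds because, with $h_1>0$, a cluster can contain a lineage of arbitrarily many generations, so $L$ is unbounded. I would also note that the conclusion is robust to whether $K$ is almost surely finite: if $E[L]=\int_0^\infty p(s)\,ds=\infty$ then $K=\infty$ a.s.\ and trivially $N_C=\infty$, whereas if $E[L]<\infty$ the Campbell computation above applies verbatim. I do not foresee a real obstacle here; the only mild care needed is the positivity of $p$ and the (routine) interchange of summation and expectation, which is immediate for nonnegative summands.
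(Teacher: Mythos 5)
Your proof is correct and follows essentially the same route as the paper: the expected number of acceptance--rejection draws at a point $\tau_{-k}$ is $1/p(-\tau_{-k})$, and integrating against the Poisson intensity $\gamma(t)=\lambda_0 p(-t)$ via Campbell's formula produces the exact cancellation that makes the integral diverge. Your added checks (positivity of $p$, Tonelli for the nonnegative integrand, and the dichotomy on whether $K<\infty$ a.s.) are sound tidiness that the paper leaves implicit, but they do not change the underlying argument.
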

\begin{proof}[Proof of Proposition \ref{prop: rasmussen}]
	For any fixed  $t>0$, the expected number of clusters generated to obtain one sample with cluster length $L>t$ equals to $P(L>t)^{-1} = p(t)^{-1}$. Therefore, 
	$$E[N_C] =\int_0^{\infty}\gamma(-t)p(t)^{-1}dt = \int_0^{\infty}\lambda_0 p(t)p(t)^{-1}dt= \int_0^{-\infty}\lambda_0dt=\infty.$$
\end{proof}

\subsubsection{New Algorithm with Improved Efficiency}\label{subsubsec: Hawkes}
The bottleneck in Step 2 is indeed a rare event simulation problem, namely, for a given $t>0$, to simulate $C_m$ conditional on the event $\{L_m>t\}$ which could have very small probability for large $t$. In our new algorithm, we use importance sampling to design a more efficient procedure to sample from the conditional distribution of $C_m$ given $L_m>t$. Besides, we also combine importance sampling with a more sophisticated acceptance-rejection procedure to avoid evaluating $p(t)$ in Step 1, thus further improve the computational efficiency. 

We shall apply exponential tilting to do the importance sampling in Step 2. However, exponential tilting with respect to the cluster length $L_m$ is not easy. Instead, we shall do exponential tilting with respect to the \textit{total birth time} 
$$B_m\triangleq\sum_{k=2}^{|C_m|} b_m^k,$$
where $b_m^k$ is the birth time of event $k$ in cluster $C_m$ as defined in \eqref{eq: b}, and $b_m^1=0$ for the immigrant event. There are two reasons for us to use $B_m$ to do the exponential tilting. 
First, as the total birth time is always larger than cluster length,  for any cluster that arrives at time $-t$ to last after time 0, it must have total birth time $>t$. Therefore, to simulate $N_0$, it is sufficient to find the set of clusters $\{C_m: m\leq -1, B_m>-\tau_m\}$. Second, to apply importance sampling to a cluster $C_m$ using exponential tilting with respect to total birth time $B_m$ is much easier. In  Proposition \ref{prop: B} below, we summarize the properties of the random variable $B_m$ that are useful for our simulation algorithm, and explain explicitly how to do exponential tilting with respect to $B_m$. 

\begin{proposition}\label{prop: B}
	Consider a cluster  $C_m$ with parameter $(h_1,f(\cdot))$. Let $L_m$ and $B_m$ be its cluster length and total birth time, respectively. The following statements are true:
	\begin{enumerate}[(1)]
		\item $B_m\geq L_m$.
		\item Under Assumption \ref{asp: light tail}, there exists $\theta_1>0$ such that for any $0<\theta<\theta_1$, the cumulant generating function (c.g.f.) of $B_m$ is well-defined, i.e. $\psi_B(\theta)\triangleq\log E[\exp(\theta B_m)]<\infty$. Besides, for $0<\theta<\theta_1$, $\psi_B(\theta)$  satisfies:
		\begin{equation}\label{eq: B cgf}
		\psi_B(\theta)=h_1 \exp(\psi_f(\theta)+\psi_B(\theta))-h_1,
		\end{equation}
		with $\psi_f(\theta) = \log(\int_0^\infty e^{\theta t}f(t)dt)$ being the c.g.f. of the birth time.
		\item Let $\mathbb{P}$ be the probability distribution of a cluster $C_m$. Let $\mathbb{Q}$ be the importance distribution of the cluster under exponential tilting by parameter $0<\eta<\theta_1$ with respect to the total birth time $B_m$, i.e.
		$$d\mathbb{Q}(C_m) =\exp(\eta B_m-\psi_B(\eta))\cdot d\mathbb{P}(C_m).$$
		Then, sampling a cluster from the importance distribution $\mathbb{Q}$ is equivalent to sampling a cluster with parameter $(h_1\exp(\psi_B(\eta)+\psi_f(\eta)), f_\eta(\cdot))$ with $f_\eta(t) = f(t) \cdot\exp(\eta t - \psi_f(\eta))$.
	\end{enumerate} 
\end{proposition}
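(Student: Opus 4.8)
The plan is to take the three parts in sequence. Part (1) is immediate from the tree structure: telescoping the definition $b_m^k=t_m^k-t_m^{pa_m^k}$ of \eqref{eq: b} from any event $k$ back to the immigrant (for which $b_m^1=0$) expresses $t_m^k-\tau_m$ as the sum of the birth times along the ancestral path from the root of $C_m$ to $k$; since birth times are nonnegative and such a path uses only a subset of the events of $C_m$, this sum is at most $B_m=\sum_{k=2}^{|C_m|}b_m^k$, and maximizing over $k$ gives $L_m\le B_m$.

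For part (2), I would use the Galton--Watson description of a cluster with parameter $(h_1,f(\cdot))$: the immigrant has $\Lambda\sim\mathrm{Poisson}(h_1)$ offspring with i.i.d. birth times $b_1,\dots,b_\Lambda$ of density $f$, and the subtree rooted at offspring $i$ is an independent copy of the cluster, with total birth time $\tilde B_i$, so that $B_m\stackrel{d}{=}\sum_{i=1}^\Lambda(b_i+\tilde B_i)$ with all ingredients independent. Conditioning on $\Lambda$ and using the Poisson probability generating function gives $E[e^{\theta B_m}]=\exp\bigl(h_1(e^{\psi_f(\theta)}E[e^{\theta B_m}]-1)\bigr)$, which is \eqref{eq: B cgf} after taking logarithms. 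To certify a genuine finite solution on an interval $(0,\theta_1)$ I would pass through the generation-truncated total birth times $B_m^{(n)}$: their moment generating functions $\phi_n$ satisfy the same recursion with $\phi_0=1$, are nondecreasing in $n$ because $B_m^{(n)}\uparrow B_m$, and stay below the smaller fixed point of the recursion by a one-step induction; monotone convergence then identifies $\lim_n\phi_n$ with $E[e^{\theta B_m}]$ and shows it solves \eqref{eq: B cgf}. Finiteness of that fixed point for small $\theta>0$ follows by continuity from the case $\theta=0$, where $\psi_f(0)=0$, the relevant root is $\psi_B(0)=0$, and it is nondegenerate precisely because $h_1<1$ makes the slope $h_1e^{0}-1$ negative; Assumption \ref{asp: light tail} guarantees $\psi_f$ is finite and continuous on $(0,\theta_0)$, which yields such a $\theta_1\in(0,\theta_0]$.

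For part (3), I would unwind the Radon--Nikodym factor $e^{\eta B_m-\psi_B(\eta)}$ along the same recursive description of the cluster law: writing $d\mathbb{P}(C_m)$ as the $\mathrm{Poisson}(h_1)$ weight of $\Lambda$ times $\prod_{i=1}^\Lambda f(b_i)\,db_i\cdot d\mathbb{P}(\tilde C_i)$, multiplication by $e^{\eta B_m}=\prod_i e^{\eta b_i}e^{\eta\tilde B_i}$ replaces $f(b_i)$ by $e^{\psi_f(\eta)}f_\eta(b_i)$ and, by the very definition of $\mathbb{Q}$ applied to the statistically identical subclusters, replaces $d\mathbb{P}(\tilde C_i)$ by $e^{\psi_B(\eta)}\,d\mathbb{Q}(\tilde C_i)$. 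Absorbing the $\Lambda$ copies of $e^{\psi_f(\eta)+\psi_B(\eta)}$ into the Poisson weight turns $\mathrm{Poisson}(h_1)$ into $\mathrm{Poisson}\bigl(h_1e^{\psi_f(\eta)+\psi_B(\eta)}\bigr)$ up to a deterministic factor, and that factor combined with $e^{-\psi_B(\eta)}$ multiplies to exactly $1$ by \eqref{eq: B cgf}. The measure that remains is, by construction, the law of a cluster with parameter $\bigl(h_1e^{\psi_f(\eta)+\psi_B(\eta)},f_\eta(\cdot)\bigr)$; I would make this recognition step rigorous by truncating at $n$ generations and inducting on $n$, the $n=1$ identity just derived serving as both base case and inductive step.

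I expect part (2) to be the main obstacle — not the functional equation \eqref{eq: B cgf}, which drops out of a single conditioning step, but the care needed to show that $E[e^{\theta B_m}]$ is actually \emph{finite} on a true interval and equals the correct (smaller) root; this is where the stability condition $h_1<1$ and the light-tail Assumption \ref{asp: light tail} are both indispensable, and where the monotone truncation argument does the real work.
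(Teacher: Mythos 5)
Your proposal is correct and rests on the same core idea as the paper: exploit the branching self-similarity of the cluster to condition on the first generation and derive both the functional equation \eqref{eq: B cgf} and the form of the tilted cluster law. Parts (1) and (2) up to the derivation of \eqref{eq: B cgf} are essentially identical to the paper's argument. Where you diverge, you are somewhat more self-contained: for finiteness of $\psi_B$ on a genuine interval, the paper simply notes that the c.g.f.\ of $|C_m|$ is finite near $0$ (citing Bordenave--Torrisi) and that $B_m$ is a compound sum of $|C_m|-1$ i.i.d.\ birth times, hence $E[e^{\theta B_m}]=E[e^{(|C_m|-1)\psi_f(\theta)}]<\infty$ for small $\theta$; your monotone truncation of the branching tree at depth $n$, with $\phi_n\uparrow E[e^{\theta B_m}]$ bounded above by the smaller fixed point of the recursion, achieves the same conclusion from first principles and additionally pins down which root of \eqref{eq: B cgf} is the right one. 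For part (3), the paper verifies that the c.g.f.\ of $B_m$ under $\mathbb{Q}$ satisfies the recursion \eqref{eq: B cgf} with the new parameters $\bigl(h_1e^{\psi_f(\eta)+\psi_B(\eta)},f_\eta\bigr)$ and then appeals to a standard reference for exponential tilting to conclude equality of cluster laws; your direct factorization of $e^{\eta B_m-\psi_B(\eta)}\,d\mathbb{P}$ generation by generation, showing explicitly that the Poisson weight, the birth-time density, and the subcluster law all tilt to the claimed parameters with the normalizing constants cancelling via \eqref{eq: B cgf}, is a more constructive version of the same check and closes the (small) gap between ``the c.g.f.\ of $B_m$ matches'' and ``the full cluster law matches.'' Both routes are sound; yours trades two external citations for a bit more explicit bookkeeping.
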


Given Proposition \ref{prop: B}, we are ready to give the whole procedure of our simulation algorithm for $N_0$ as described in Algorithm \ref{alg: hawkes}. The proof of Proposition \ref{prop: B} is given in Appendix \ref{apdx: proof B}.\\

\begin{algorithm}[h]
	\caption{Simulating $N_0$}
	\label{alg: hawkes}
	\begin{algorithmic}
		\REQUIRE parameters of the Hawkes Process $(\lambda_0, h_1,f(\cdot))$, a positive constant $0<\eta<\theta_1$
		\ENSURE  $N_0 =\{C_m: m\leq -1, L_m>-\tau_m\}$ \\
		\STATE 1. Compute $\psi_B(\eta)$ by solving Equation \eqref{eq: B cgf}.
		\STATE 2. Generate a non-homogeneous Poisson process on $(-\infty, 0]$ with rate function $$\tilde{\gamma}(t) = \lambda_0\exp(\psi_B(\eta)+\eta t),~ t\leq 0,$$
		~~~and obtain arrivals $\{\tau_1,...,\tau_{K}\}$.
		\STATE 3. Initialize $N_0=\{\}$. 
		\STATE 4. For $m = 1,2,...,K$:\\
		~~~Generate a cluster $C_m$ with parameter $(h_1\exp(\psi_f(\eta)+\psi_B(\eta)),f_\eta(\cdot))$ and $X_m\sim U[0,1]$. \\ 
		~~~Accept $C_m$ and update $N_0 = N_0 \cup \{C_m\}$ if both of the following conditions are satisfied: 
		\begin{enumerate}[i.]
			\item $L_m>-\tau_m$,
			\item $X_m\leq \exp(-\eta(B_m+\tau_m))$.
		\end{enumerate}
		\STATE 5. Return $N_0$.
	\end{algorithmic}
\end{algorithm}

Algorithm 1 contains two importance-sampling steps. In Step 2,  instead of simulating the arrivals of clusters following the non-homogeneous Poisson with intensity function $\gamma(t)$, it simulates a non-homogeneous Poisson with larger intensity function $\tilde{\gamma}(t)\geq \gamma(t)$ (by Markov's Inequality). In Step 4, it applies importance sampling to generate the conditional distribution of clusters. In the end, it utilizes one step of acceptance-rejection to transform the two importance sampling probability laws jointly into the target distribution. 

The first main result of the paper is stated as Theorem \ref{thm1} below, in which we provide a theoretical guarantee that the output of Algorithm \ref{alg: hawkes} follows exactly the distribution of $N_0$, and an explicit expression of algorithm complexity in terms of model and algorithm parameters.

\begin{thm}\label{thm1}
	The list of clusters generated by Algorithm \ref{alg: hawkes} exactly follows the distribution of $N_0$. In particular, 
	\begin{enumerate}[(1)]
		\item The arrival times of the clusters follow a non-homogeneous Poisson process  with intensity $\gamma(t)=\lambda_0p(-t)$ for $t\in(-\infty,0]$. 
		\item For each cluster $C_m$ in the list, given its arrival time $\tau_m$, it follows the conditional distribution of a cluster given that the cluster length $>-\tau_m$.   
	\end{enumerate}
	Besides, the expected total number of random variables generated by Algorithm \ref{alg: hawkes} before termination is
	\begin{equation}\label{eq: complexity}
	\frac{\lambda_0\exp(\psi_B(\eta))(2-h_1-\psi_B(\eta))}{\eta(1-h_1-\psi_B(\eta))}.
	\end{equation} 
\end{thm}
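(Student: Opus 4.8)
The plan is to realize Algorithm~\ref{alg: hawkes} as a \emph{marked Poisson process followed by mark-dependent thinning} and to identify the law of the thinned process with that of $N_0$ via the Poisson marking (colouring) theorem; once that is done, part~(3) reduces to a tally combined with Wald's identity. First I would recall, from the discussion preceding the algorithm, that $N_0$ is itself a Poisson cluster process: its cluster arrival times form a non-homogeneous Poisson process on $(-\infty,0]$ with intensity $\gamma(t)=\lambda_0 p(-t)$, and, conditionally on an arrival at time $\tau_m$, the attached cluster has the law $\mathbb{P}(\,\cdot\mid L_m>-\tau_m)$. Hence parts~(1)--(2) are precisely the assertion that Algorithm~\ref{alg: hawkes} outputs this object.

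For this, view the input of Step~4 as a marked Poisson process on $(-\infty,0]$: points $\tau_m$ with intensity $\tilde\gamma(t)=\lambda_0\exp(\psi_B(\eta)+\eta t)$, each carrying an independent mark $C_m$ drawn from the tilted cluster law $\mathbb{Q}$ of Proposition~\ref{prop: B}(3). The algorithm retains $(\tau_m,C_m)$ with probability $\mathbf{1}\{L_m>-\tau_m\}\exp(-\eta(B_m+\tau_m))$. I would first check this is a genuine probability, which is exactly where $B_m\ge L_m$ (Proposition~\ref{prop: B}(1)) is used: on the event $\{L_m>-\tau_m\}$ we get $B_m\ge L_m>-\tau_m$, so $B_m+\tau_m>0$ and the weight lies in $(0,1)$. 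By the marking/thinning theorem the retained marked points again form a Poisson process on $(-\infty,0]\times\{\text{clusters}\}$ with intensity measure $\tilde\gamma(t)\,dt\otimes d\mathbb{Q}(C)\cdot\mathbf{1}\{L(C)>-t\}\exp(-\eta(B(C)+t))$. The decisive step is the cancellation: since $d\mathbb{Q}(C)=\exp(\eta B(C)-\psi_B(\eta))\,d\mathbb{P}(C)$, the factors $\exp(\eta B(C))$ and $\exp(-\eta B(C))$ annihilate and $\exp(\psi_B(\eta)+\eta t)$ cancels $\exp(-\eta t-\psi_B(\eta))$, so the intensity measure collapses to $\lambda_0\,\mathbf{1}\{L(C)>-t\}\,dt\,d\mathbb{P}(C)$. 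Integrating out the mark yields arrival intensity $\lambda_0\,\mathbb{P}(L>-t)=\lambda_0 p(-t)=\gamma(t)$, which is~(1); the conditional law of the mark given an arrival at $t$ is $\mathbf{1}\{L>-t\}\,d\mathbb{P}(C)/p(-t)=\mathbb{P}(\,\cdot\mid L>-t)$, which is~(2); conditional independence of marks across points is inherited from the marking theorem. This pins down the output as $N_0$ in distribution.

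For the complexity in~(3), I would tally the generated variables and use Wald's identity together with the tower property. Step~2 produces $K$ Poisson arrivals with $E[K]=\int_{-\infty}^0\tilde\gamma(t)\,dt=\lambda_0\exp(\psi_B(\eta))/\eta$, which is finite exactly because $\eta>0$, so the loop over $m=1,\dots,K$ has finite expected length. For each $m$, Proposition~\ref{prop: B}(3) together with \eqref{eq: B cgf} shows $C_m\sim\mathbb{Q}$ is a subcritical Galton--Watson cluster with offspring mean $h_1\exp(\psi_f(\eta)+\psi_B(\eta))=h_1+\psi_B(\eta)$; this is $<1$ for $\eta\in(0,\theta_1)$ because finiteness of $\psi_B(\eta)$ forces subcriticality (a critical or supercritical cluster would give $E[\exp(\eta B_m)]=\infty$), so $E[|C_m|]=1/(1-h_1-\psi_B(\eta))$. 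A careful count of the generated variables---one exponential inter-arrival time, the birth times of the $|C_m|-1$ non-immigrant events, and the uniform $X_m$ per cluster---yields a mean per-cluster cost of $1+1/(1-h_1-\psi_B(\eta))$, and, since $K$ is independent of the i.i.d.\ per-cluster costs, Wald's identity gives the expected total
\[
\frac{\lambda_0\exp(\psi_B(\eta))}{\eta}\left(1+\frac{1}{1-h_1-\psi_B(\eta)}\right)=\frac{\lambda_0\exp(\psi_B(\eta))\,(2-h_1-\psi_B(\eta))}{\eta\,(1-h_1-\psi_B(\eta))},
\]
which is~\eqref{eq: complexity}; this is finite precisely when $1-h_1-\psi_B(\eta)>0$, i.e.\ for admissible $\eta$.

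I expect the main obstacle to be part~(1)--(2): justifying rigorously that the two layers of importance sampling---inflating the immigrant intensity from $\gamma$ to $\tilde\gamma$ and exponentially tilting each cluster from $\mathbb{P}$ to $\mathbb{Q}$---are undone \emph{simultaneously} by the single Bernoulli acceptance step. The care lies in (i) invoking the marking theorem in the form valid for retention probabilities that depend on the mark, (ii) verifying the retention probability never exceeds $1$ (which is exactly why one tilts through $B_m$ rather than $L_m$, via Proposition~\ref{prop: B}(1)), and (iii) checking that the Radon--Nikodym factor $\exp(\eta B_m-\psi_B(\eta))$ of $\mathbb{Q}$ over $\mathbb{P}$ cancels precisely against the acceptance weight to restore the target intensity measure. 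Once this is in place, part~(3) is essentially bookkeeping built on the Galton--Watson offspring mean read off from \eqref{eq: B cgf}.
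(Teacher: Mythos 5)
Your proposal is correct and takes essentially the same route as the paper: both rest on Poisson thinning combined with the Radon--Nikodym cancellation between the tilted cluster law $\mathbb{Q}$ and the inflated immigrant intensity $\tilde\gamma$, and both derive the complexity from the expected number of clusters $\lambda_0 e^{\psi_B(\eta)}/\eta$ times the expected per-cluster cost $1+1/(1-h_1-\psi_B(\eta))$, using \eqref{eq: B cgf} to rewrite $h_1 e^{\psi_f(\eta)+\psi_B(\eta)}$ as $h_1+\psi_B(\eta)$. The paper computes the acceptance probability directly by change of measure and then invokes the thinning theorem, whereas you package the same computation as a marked Poisson process thinned with a mark-dependent retention probability; the two are equivalent. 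Your two added sanity checks---using $B_m\ge L_m$ from Proposition~\ref{prop: B}(1) to confirm the acceptance weight lies in $(0,1]$, and noting that finiteness of $\psi_B(\eta)$ forces subcriticality of the tilted offspring mean---are not spelled out in the paper's proof but are valid and do tighten the argument.
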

\begin{proof}[Proof of Theorem \ref{thm1}]
	To prove Statement (1), by Poisson thinning theorem, it suffices to show that, for each $m$, the acceptance probability of the cluster $C_m$ equals to $$\frac{\gamma(\tau_m)}{\tilde{\gamma}(\tau_m)} = \mathbb{P}(L_m>-\tau_m)\exp(-\eta\tau_m)/\exp(\psi_B(\eta)).$$ According to Proposition \ref{prop: B}, the importance distribution $\mathbb{Q}$ and the target distribution $\mathbb{P}$ satisfies 
	$$d\mathbb{Q}(B_m = x) = d\mathbb{P}(B_m=x)\cdot\frac{\exp(\eta x)}{\exp(\psi_B(\eta))} .$$
	Therefore, the probability for cluster $C_m$ to be accepted in Step 3  is
	\begin{equation*}
	\begin{aligned}
	&E_\mathbb{Q}\left[1\left(L_m>-\tau_m\text{ and }X_m<\exp(-\eta (B_m+ \tau_m))\right)\right]\\
	=~& \int  1(L_m>-\tau_m) \exp(-\eta (B_m+ \tau_m))d\mathbb{Q}\\ 
	=~& \int  1(L_m>-\tau_m) \exp(-\eta (B_m+ \tau_m)) d\mathbb{P}\cdot\frac{\exp(\eta B_m)}{\exp(\psi_B(\eta))} \\
	= ~&\int  \frac{\exp(-\eta\tau_m) }{\exp(\psi_B(\eta))}1(L_m>-\tau_m) d\mathbb{P}\\
	=~&\mathbb{P}(L_m>-\tau_m)\exp(-\eta\tau_m)/\exp(\psi_B(\eta)).
	\end{aligned}
	\end{equation*}
	Therefore, we obtain Statement (1).
	
	From the above calculation, we can also see that, given $m$ and $\tau_m$, and any event $A\in \sigma(C_m)$, the joint probability
	$$P(C_m\in A, C_m\text{ is accepted}) = \int  \frac{\exp(-\eta\tau_m) }{\exp(\psi_B(\eta))}1(C_m\in A, L_m>-\tau_m) d\mathbb{P} \propto \mathbb{P}(L_m>-\tau_m, C_m\in A).$$
	Therefore, the accepted sample of $C_m$ indeed follows the conditional distribution of $C_m$ given $\{L_m> -\tau_m\}$, and we obtain Statement (2).
	
	To check \eqref{eq: complexity}, we first note that the expected total number of random variables generated by Algorithm \ref{alg: hawkes} is equal to the expected total number of clusters multiplied by the average number of random variables generated in one cluster. In Step 2, the number of clusters generated is a Poisson random variable with mean $\int_0^\infty \tilde{\gamma}(-t)dt = \lambda_0\exp(\psi_B(\eta))/\eta$. The average number of events in each cluster is 
	$$\frac{1}{1-h_1\exp(\psi_f(\eta)+\psi_B(\eta))} = \frac{1}{1-h_1-\psi_B(\eta)},$$
	where the last equality follows from \eqref{eq: B cgf}. Besides, for each cluster, the algorithm also need to simulate an extra random number in the acceptance-rejection step. Therefore, the expected total number of  random variables is 
	$$\frac{\lambda_0\exp(\psi_B(\eta))}{\eta}\cdot\left(1+\frac{1}{1-h_1-\psi_B(\eta)}\right)=\frac{\lambda_0\exp(\psi_B(\eta))(2-h_1-\psi_B(\eta))}{\eta(1-h_1-\psi_B(\eta))},$$ 
	which closes the proof.
\end{proof}	
\begin{remark}
	The complexity result \eqref{eq: complexity} not only guarantees that Algorithm \ref{alg: hawkes} terminates in finite time in expectation, it also provides some guidance to the optimal choice of $\eta$ that reduces the computational cost.
\end{remark}

\vskip 2ex

\textbf{Reversing the Time } So far we have focused on simulating a stationary Hawkes process forward in time. But to simulate the steady-state waiting time $W_\infty$ following \eqref{eq: W infty}, we need to generate the sample path of stationary Hawkes process \textit{backward in time} on  the interval $(-\infty, 0]$. We now explain briefly how to do this using Algorithm \ref{alg: hawkes}.

Recall that $\tau_m$ and $\delta_m$ are the arrival time and departure time of cluster $C_m$ as defined in Section \ref{subsec: Hawkes}. By definition, the cluster length $L_m=\delta_m-\tau_m$. Since $L_m$ are i.i.d. distributed and independent of $\tau_m$, by Poisson thinning theorem, $\{\delta_m\}$ also follows a homogeneous Poisson process of rate $\lambda_0$ just as $\{\tau_k\}$. Therefore, to simulate the Hawkes process backward in time, we can first apply Algorithm \ref{alg: hawkes} to simulate $N_0$, i.e. list of clusters that depart after time 0 but arrive before time 0, and then simulate those clusters that depart before time 0 according to a Poisson process with rate $\lambda_0$. In detail, we can use the following procedure to simulate a stationary Hawkes process backward in time on $[-t,0]$ for any $t>0$:
\vskip 1ex
\begin{enumerate}
	\item Call Algorithm \ref{alg: hawkes} to simulate $N_0$.
	\item Simulate a Poisson process with rate $\lambda_0$ on $[-t,0]$ and obtain $0\geq \delta_1>...>\delta_K\geq -t$.
	\item For each $\delta_m$, simulate a cluster of events following Step 3 in Definition \ref{def: Hawkes cluster}. Adjust the event times accordingly such that the arrival time of the last event equals to $\delta_m$.
\end{enumerate}

Once a stationary Hawkes process can be simulated backward in time, we can simulate the process $R(t)$ in \eqref{eq: W infty} for any $t>0$. The next step is to find out $\max_{t\geq 0} R(t)$.

\subsection{Perfect Sampling of Single-Server Queue with Hawkes Arrivals}\label{subsec: algorithm Q}
In this part, we explain the second key step in our algorithm, namely, to simulate $\max_{t\geq 0}R(t)$. According to \eqref{eq: W infty}, once $\max_{t\geq 0}R(t)$ is simulated, we just return its value as an exact sample of $W_\infty$. 

Recall 
that $\{U_n:n\leq -1\}$ is the sequence of inter-arrival times of the Hawkes process. Then,  for any $t\geq 0$,
$$-\sum_{m=-1}^{-N(-t)} U_n \geq -t,$$
because the left side is the arrival time of the first customer after time $-t$. Besides, the equality holds when $t$ is the arrival time of an event. Therefore,
$$\max_{t\geq 0} R(t) = \max_n \sum_{m=-1}^{-n}(V_m-U_m).$$
With slightly notation abusing, let's denote $R(n)\triangleq \sum_{m=-1}^{-n}(V_m-U_m)$.

For a  GI/GI/1 queue, the corresponding $\{R(n):n\geq 1 \}$ is basically a random walk with negative drift. For example, in \cite{EnsorGlynn2000}, the perfect sampling algorithm for GI/GI/1 is presented as a direct application of the simulation algorithm for the maximum of random walks with negative drift.  In our case, however, due to the self-exciting behavior of arrivals, $U_n$ has sequential dependence and  as a result, the distribution of $R(n)$ is more complicated than a random walk. We shall deal with this issue by constructing an auxiliary random walk $\tilde{R}(m)$ coupled with the Hawkes process, such that $\tilde{R}(m)$ has negative drift and its running maximum $\max_{m\geq n}\tilde{R}(n)$ ``dominates" the process $R(n)$ in a proper way. Then, we can bound and learn the exact value of $\max_{n\geq 0}R(n)$ by simulating the running maximum $\max_{m\geq n}\tilde{R}(m)$, applying the techniques developed in \cite{BlanchetChen_MOR_2019}.

\subsubsection{The Auxiliary Random Walk} We first explain our construction of the random walk $\tilde{R}(m)$. For a stationary Hawkes process backward in time, let's index its clusters in the order of their departure times. (In our previous notation, we index clusters in the order of their arrival times.) In detail, we shall denote the $m$-th cluster that depart before time $0$ as cluster $C_{-m}$ and its departure time as $\delta_{-m}<0$. Similarly, clusters that depart after time $0$ are also indexed by positive integers in the order of their departure times. We now denote by $A_{-k}\triangleq -\sum_{i=1}^k U_{-i}$ as the arrival time of the $k$-th customer before time 0. By definition, $\delta_{-m}$ is the arrival time of the last customer in cluster $C_{-m}$. As a result, for each $m\geq 1$, there must exist $k_m\geq 0$ such that
$$A_{-k_m}=\sum_{i=1}^{k_m}U_{-i} = -\delta_{-m}.$$
We define the auxiliary random walk $\tilde{R}(m)$ as the total service requests in cluster $C_{-1}$, ..., $C_{-m}$ minus the units of time that elapsed, i.e.
$$\tilde{R}(m)\triangleq \sum_{n=-1}^{-m}\sum_{k=1}^{|C_n|}V_n^k +\delta_{-m}.$$
Besides, for each $m$ we denote by $J(m)$ as the total service requests of customers who arrive before time $\delta_{-m}$ and belong to clusters that depart after time $\delta_{-m}$, i.e.
$$J(m) = \sum_{n\geq -m}\sum_{k=1}^{|C_n|}V_n^k \cdot1(t_n^k< \delta_{-m}).$$

The next proposition shows that the auxiliary random walk $\tilde{R}(m)$ has negative drift, and its increment ``dominates" the increment of $R(k)$ in a certain sense. Its proof is given in Appendix \ref{apdx: proof random walk}.
\begin{proposition}\label{prop: random walk}
	The following statements about the random walk $\tilde{R}(m)$ are true:
	\begin{enumerate}[(1)]
		\item $E[\tilde{R}(m)] <0$.
		\item For all $m_2\geq m_1\geq 1$, and any $k_{m_2}\leq k<k_{m_2+1}$, 
		$$R(k)\leq R(k_{m_1}) +J(m_1) +\tilde{R}(m_2) -\tilde{R}(m_1)$$
	\end{enumerate}
\end{proposition}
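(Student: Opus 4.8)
The plan is to expand $R$ and $\tilde R$ into sums of individual service times together with customer arrival and cluster departure times, then to handle statement (1) by linearity of expectation and the marked--Poisson structure of the clusters, and statement (2) by a pathwise accounting of which clusters can contribute a customer to a given time window. Throughout I will write $G_{-n}\triangleq\sum_{k=1}^{|C_{-n}|}V_{-n}^k$ for the total service requirement of cluster $C_{-n}$, so that $\tilde R(m)=\sum_{n=1}^{m}G_{-n}+\delta_{-m}$, and recall $R(j)=\sum_{i=1}^{j}V_{-i}+A_{-j}$ with $A_{-k_m}=\delta_{-m}$.

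For statement (1): the cluster arrival process $\{(\tau_m,C_m)\}_{m\in\mathbb Z}$ is a Poisson process of rate $\lambda_0$ with i.i.d.\ marks (the clusters, with the service times of their customers attached), and the displacement $\tau_m\mapsto\delta_m=\tau_m+L_m$ depends on a mark only through its internal structure. By the displacement theorem for Poisson processes, $\{(\delta_m,C_m)\}_{m\in\mathbb Z}$ is again a Poisson process of rate $\lambda_0$ with the same mark law; hence, conditionally on the departure times, $C_{-1},\dots,C_{-m}$ are i.i.d.\ copies of a generic cluster. Since the i.i.d.\ service times are independent of the cluster structure and a generic cluster has $E[|C|]=\sum_{j\ge 0}h_1^{j}=1/(1-h_1)$ (the expected total progeny of a subcritical branching process with offspring mean $h_1$), and since $-\delta_{-m}$ is $\mathrm{Gamma}(m,\lambda_0)$ (as $\{\delta_m\}$ is a Poisson process of rate $\lambda_0$),
\[
E[\tilde R(m)]=m\,E[|C|]\,E[V]+E[\delta_{-m}]=\frac{m\,E[V]}{1-h_1}-\frac{m}{\lambda_0},
\]
which is negative exactly under the stability condition \eqref{eq: stable}.

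For statement (2): first eliminate the departure times. A direct computation using $A_{-k_{m_1}}=\delta_{-m_1}$ and the definitions gives
\[
R(k_{m_1})+J(m_1)+\tilde R(m_2)-\tilde R(m_1)=\sum_{i=1}^{k_{m_1}}V_{-i}+J(m_1)+\sum_{n=m_1+1}^{m_2}G_{-n}+\delta_{-m_2},
\]
while $R(k)\le\sum_{i=1}^{k}V_{-i}+\delta_{-m_2}$ since $A_{-k}\le A_{-k_{m_2}}=\delta_{-m_2}$ for $k\ge k_{m_2}$. So it remains to prove the service-time bound
\[
\sum_{i=k_{m_1}+1}^{k}V_{-i}\;\le\;J(m_1)+\sum_{n=m_1+1}^{m_2}G_{-n},
\]
whose left side is the total service requirement of the customers whose arrival times fall in the window $[A_{-k},\delta_{-m_1})$. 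The key geometric observation is that, since $k<k_{m_2+1}$ gives $A_{-k}>A_{-k_{m_2+1}}=\delta_{-(m_2+1)}$ and $\delta_{-(m_2+1)},\delta_{-m_2}$ are consecutive departure times, every such customer belongs to a cluster whose departure time is $\ge\delta_{-m_2}$, i.e.\ to some $C_{-n}$ with $1\le n\le m_2$ or to some $C_n$ with $n\ge 1$. A customer of $C_{-n}$ with $m_1<n\le m_2$ contributes to $\sum_{n=m_1+1}^{m_2}G_{-n}$ (including whole clusters only helps, since service times are nonnegative); every other such customer belongs to a cluster that departs after $\delta_{-m_1}$ but, lying in the window, arrives before $\delta_{-m_1}$, hence is one of the summands of $J(m_1)=\sum_{n\ge -m_1}\sum_k V_n^k\,\mathbf 1(t_n^k<\delta_{-m_1})$. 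Summing over these two disjoint families of customers yields the bound, and thus statement (2).

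The step I expect to be the main obstacle is the bookkeeping in statement (2): one must correctly localize the window via $\delta_{-(m_2+1)}<A_{-k}\le\delta_{-m_2}$, identify the ``straddling'' clusters (those arriving before $\delta_{-m_1}$ and departing after it) as precisely the ones captured by $J(m_1)$, and verify that the crude bound obtained by counting whole clusters reassembles, after the $\delta$'s telescope, into exactly $R(k_{m_1})+J(m_1)+\tilde R(m_2)-\tilde R(m_1)$. Statement (1) is routine once the displacement structure is invoked, the only mild point being that re-indexing the clusters by departure time does not perturb the generic cluster law.
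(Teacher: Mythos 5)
Your proof is correct and takes essentially the same route as the paper: statement (1) via the displacement theorem and the observation that inter-departure times are i.i.d.\ exponential of rate $\lambda_0$, and statement (2) via the same cluster-by-cluster bookkeeping (partitioning the customers in the window $[A_{-k},\delta_{-m_1})$ into those from $C_{-n}$, $m_1<n\le m_2$, versus those captured by $J(m_1)$, then dropping indicators). Your version is a bit cleaner because you cancel $R(k_{m_1})$ and isolate $\delta_{-m_2}$ up front, reducing the inequality to a pure service-time bound, whereas the paper carries the inter-arrival sums through the whole chain of inequalities; the underlying accounting is identical.
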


\subsubsection{Perfect Sampling Algorithm for $W_\infty$} The following corollary is a direct consequence of Proposition \ref{prop: random walk}, and provides a way to find $\max_{k\geq 0} R(k)$ by simulating the running maximum of the auxiliary random walk $\tilde{R}(m)$.
\begin{corollary}\label{crll1}
	Suppose there exist $m_2\geq m_1\geq 1$ such that the following statements are true:
	\begin{enumerate}[(a)]
		\item $\tilde{R}(m_2)-\tilde{R}(m_1)\leq - J(m_1)$;
		\item $\max_{m\geq m_2} \tilde{R}(m)-\tilde{R}(m_2)\leq \max_{0\leq k\leq k_{m_2}} R(k) - R(k_{m_1})$.
	\end{enumerate}
	Then, we can conclude
	$$\max_{k\geq 0} R(k) = \max_{0\leq k\leq k_{m_2}}R(k).$$
\end{corollary}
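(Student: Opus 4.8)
The plan is to reduce the claim to the single inequality $\max_{k > k_{m_2}} R(k)\le \max_{0\le k\le k_{m_2}} R(k)$, since the reverse inequality $\max_{k\ge 0} R(k)\ge \max_{0\le k\le k_{m_2}} R(k)$ is immediate. So I would fix an arbitrary customer index $k>k_{m_2}$ and bound $R(k)$ by $\max_{0\le k'\le k_{m_2}}R(k')$.

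First I would locate $k$ in the partition of customer indices induced by the cluster departure times. Since every cluster contains at least its immigrant event, $k_{m+1}\ge k_m+1$; and since $\delta_{-m}\to-\infty$ as $m\to\infty$, the number of customers $k_m$ arriving by time $\delta_{-m}$ diverges almost surely, so there is a unique $m$ with $k_m\le k< k_{m+1}$. Monotonicity of $m\mapsto k_m$ together with $k>k_{m_2}$ forces $m\ge m_2\ (\ge m_1)$. Hence Proposition \ref{prop: random walk}(2) applies with the given $m_1$ and with its ``$m_2$'' taken to be this $m$, yielding
$$R(k)\ \le\ R(k_{m_1}) + J(m_1) + \tilde{R}(m) - \tilde{R}(m_1).$$

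Next I would feed in hypotheses (a) and (b). Rewriting (a) as $J(m_1)-\tilde{R}(m_1)\le -\tilde{R}(m_2)$ and substituting gives $R(k)\le R(k_{m_1}) + \big(\tilde{R}(m) - \tilde{R}(m_2)\big)$. Because $m\ge m_2$, we have $\tilde{R}(m)-\tilde{R}(m_2)\le \max_{m'\ge m_2}\tilde{R}(m') - \tilde{R}(m_2)$, and hypothesis (b) bounds the latter by $\max_{0\le k'\le k_{m_2}}R(k') - R(k_{m_1})$. Chaining these, the $R(k_{m_1})$ terms cancel and $R(k)\le \max_{0\le k'\le k_{m_2}}R(k')$. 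As $k>k_{m_2}$ was arbitrary, this gives the desired inequality and hence the corollary.

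The argument is essentially a chain of the already-established estimates, so I do not expect a serious obstacle. The only points needing care are the bookkeeping that every $k>k_{m_2}$ lies in some interval $[k_m,k_{m+1})$ with $m\ge m_2$ (using $k_m\to\infty$ and monotonicity of $m\mapsto k_m$), and keeping straight the direction of the inequality in (a) when eliminating $J(m_1)$ and $\tilde{R}(m_1)$ in favor of $\tilde{R}(m_2)$.
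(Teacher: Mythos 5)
Your proof is correct and follows essentially the same route as the paper's: decompose $R(k)$ for $k\geq k_{m_2}$ via Proposition \ref{prop: random walk}(2) with the interval $[k_m,k_{m+1})$ containing $k$, then eliminate $J(m_1)$ and $\tilde{R}(m_1)$ using hypothesis (a) and bound the remaining $\tilde{R}(m)-\tilde{R}(m_2)$ by hypothesis (b). The only difference is cosmetic — you apply (a) before (b) rather than simultaneously, and you spell out the bookkeeping showing $m\geq m_2$, which the paper leaves implicit.
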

\begin{proof}[Proof of Corollary \ref{crll1}]
	For any $k\geq k_{m_2}$, suppose $k_m\leq k< k_{m+1}$ for some $m\geq m_2$. Following Statement (2) in Proposition \ref{prop: random walk}, we have
	\begin{equation*}
	\begin{aligned}
	R(k)&\leq R(k_{m_1})+J(m_1)+\tilde{R}(m)-\tilde{R}(m_1)\\
	&= R(k_{m_1}) + J(m_1)+(\tilde{R}(m)-\tilde{R}(m_2) )+(\tilde{R}(m_2)-\tilde{R}(m_1)) \\
	&\leq R(k_{m_1}) + J(m_1) + \max_{0\leq k\leq k_{m_2}} R(k) - R(k_{m_1}) -J(m_1)\\
	& = \max_{0\leq k\leq k_{m_2}} R(k).
	\end{aligned}
	\end{equation*}
	Therefore, $\max_{k\geq 0} R(k) = \max_{0\leq k\leq k_{m_2}}R(k)$.
\end{proof}

Corollary \ref{crll1} implies that we can stop simulating the stationary Hawkes process (backward in time) and return $W_\infty = \max_{0\leq k\leq k_{m_2}}R(k)$ when a pair of random times $m_1$ and $m_2$ satisfying Conditions (a) and (b)  are detected. For given $m_1$, and the Hawkes process up to cluster $C_{-m}$, it is straightforward to check whether $m$ satisfies Condition (a). Besides, since $\tilde{R}(m)$ is a random walk with strictly negative drift, the smallest $m_2$ that satisfying Condition (a) is finite. Condition (b) involves checking whether the running maximum of $\tilde{R}(m)$ exceeds a given level. 
Based on the above observations,  we provide the outline of our perfect sampling algorithm for $W_\infty\stackrel{d}{=}\max_{k\geq 0}R(k)$ as describe in Algorithm \ref{alg: hawkes Q}

\begin{algorithm}[h]
	\caption{Perfect Sampling Algorithm for $W_\infty$}
	\label{alg: hawkes Q}
	\begin{algorithmic}
		\REQUIRE parameters of the Hawkes Process $(\lambda_0, h_1,f(\cdot))$,  service time distribution
		\ENSURE  $W_\infty$\\
		\STATE1. Call Algorithm \ref{alg: hawkes} to simulate the set of clusters $N_0$. 
		\STATE2. Set $m_1=0$, $m_2=0$, $M = 0$, compute $J=J(0)$ from $N_0$.
		\STATE3. Starting from $m_1$, keep simulating the random walk $\tilde{R}(m)$ for $m\geq m_1$ along with the corresponding clusters and their departure times until $m=m_2$ such that 
		$$\tilde{R}(m_2)-\tilde{R}(m_1)\leq -J.$$ 
		~~~~Compute $R(k)$ for $k_{m_1}\leq k\leq k_{m_2}$.\\
		~~~~Update 
		$M =\max_{0\leq k\leq k_{m_2}} R(k) - R(k_{m_1}) $.
		\item 4. Simulate a Bernoulli random variable $B$ with $P(B=1) = P(\max_{m\geq 0} \tilde{R}(m) > M)$. 
		\item If $B=0$,\\
		~~~~~\textbf{Return} $W_\infty = \max_{0\leq k\leq k_{m_2}} R(k)$.\\
		\item If $B=1$, \\
		~~~Simulate a sample path  $\tilde{R}^*(m)$ following the conditional distribution of $\tilde{R}(\cdot)$ given $\{\max_{m\geq 0} \tilde{R}(m) >M\}$ until $\Delta = \inf\{m: \tilde{R}^*(m)>M\}$. \\
		~~~~~~For $m_2+1\leq m\leq m_2+\Delta$, update $\tilde{R}(m) = \tilde{R}(m_2) +\tilde{R}^*(m-m_2)$. \\ 
		~~~~~~Update $m_1=m_2+\Delta$ and $J=J(m_1)$. \\
		~~~~~~Go back to Step 3.
	\end{algorithmic}
\end{algorithm}

In Step 4 of Algorithm \ref{alg: hawkes Q}, we directly apply the importance sampling technique in \cite{BlanchetChen_MOR_2019} to simulate the Bernoulli random variable jointly with the auxiliary random walk and to check Condition (b). To do this, we first need to verify that all assumptions in \cite{BlanchetChen_MOR_2019} are satisfied.
\begin{proposition}\label{prop: tilting}
	The following statements are true under Assumptions \ref{asp: stable} and \ref{asp: light tail}:
	\begin{enumerate}
		\item Define $K= \sum_{k=1}^{|C_{-1}|}V_{-1}^k$. Then, there exists $\theta_2>0$ such that for all $0\leq \theta<\theta_2$,
		$$\psi_K(\theta)\triangleq \log(E[\exp(\theta K)])<\infty.$$
		\item Recall that $C_m=\{(k,t_m^k,pa_m^k,V_m^k)\}$ as defined in Section \ref{subsec: HG1}. Let $\mathbb{P}$ be the probability distribution of $\{(C_m,\tau_m): m\leq -1\}$ generated by a Hawkes process with parameter $(\lambda_0, h_1, f(\cdot))$ and service time distribution $g(\cdot)$. Let $\mathbb{Q}$ be the importance  distribution of $\{(C_m,\tau_m):m\leq -1\}$  such that for all $n\geq 0$,
		$$d\mathbb{Q}((C_{-1},\tau_{-1}),...,(C_{-n},\tau_{-n})) \propto \exp(\eta\tilde{R}(n))\mathbb{P}((C_{-1},\tau_{-1}),...,(C_{-n},\tau_{-n})),$$
		for some constant $0\leq \eta<\theta_2$. Then, the distribution of $\{(C_m,\tau_m): m\leq -1\}$  under $\mathbb{Q}$,  can be generated by a Hawkes process with parameter set $\left(\lambda_0+\eta, h_1\exp(\psi_K(\eta)),f(\cdot) \right)$ and service time distribution $g_{\eta}(t)= g(t)\exp(\eta t -\psi_V(\eta))$.
	\end{enumerate}
\end{proposition}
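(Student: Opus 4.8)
The plan is to treat the two statements separately; both hinge on viewing the total service requirement of a single cluster as a random sum indexed by the total progeny of a subcritical Galton--Watson tree.

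\emph{Statement (1).} By the cluster representation (Definition \ref{def: Hawkes cluster}), the number of events $|C_{-1}|$ in a cluster is the total progeny of a Galton--Watson branching process whose offspring law is Poisson with mean $\int_0^\infty h(t)\,dt = h_1 < 1$. The service times $V_{-1}^k$ are i.i.d.\ with density $g$ and independent of the cluster structure, so conditioning on $|C_{-1}|$ gives
\[
E[\exp(\theta K)] = E\big[\,(E[\exp(\theta V)])^{|C_{-1}|}\,\big] = E\big[\exp\!\big(\psi_V(\theta)\,|C_{-1}|\big)\big],
\]
where $\psi_V(\theta) = \log\int_0^\infty e^{\theta t}g(t)\,dt < \infty$ for $\theta < \theta_0$ by Assumption \ref{asp: light tail}. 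It then suffices to note that $|C_{-1}|$ follows the Borel distribution $P(|C_{-1}| = n) = e^{-h_1 n}(h_1 n)^{n-1}/n!$, whose c.g.f.\ is finite on $(-\infty, -\log(h_1 e^{1-h_1}))$ — a genuine neighborhood of $0$ since $h_1 e^{1-h_1} < 1$ for $h_1 \in (0,1)$ — and to pick $\theta_2 \in (0,\theta_0)$ small enough that $\psi_V(\theta) < -\log(h_1 e^{1-h_1})$ for $0 \le \theta < \theta_2$, which is possible by continuity of $\psi_V$ with $\psi_V(0)=0$. (Equivalently, $\psi_{|C|}(s) := \log E[e^{s|C|}]$ solves the same kind of self-consistent equation as \eqref{eq: B cgf}, namely $\psi_{|C|}(s) = s + h_1(e^{\psi_{|C|}(s)} - 1)$, which has a finite root for small $s$ by an implicit-function argument at $s=0$, where the relevant derivative $h_1 - 1 \neq 0$.)

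\emph{Statement (2).} First I would record a structural fact: by the displacement theorem applied to the Poisson($\lambda_0$) immigrant process marked with i.i.d.\ clusters and shifted by the (mark-dependent) cluster length, the departure process $\{(\delta_m, C_m)\}$ is again a Poisson process of rate $\lambda_0$ carrying i.i.d.\ clusters, with departure times independent of the attached clusters. Hence, enumerating the pre-$0$ clusters by decreasing departure time, the inter-departure gaps $D_i := \delta_{-(i-1)} - \delta_{-i}$ (with $\delta_0 := 0$) are i.i.d.\ $\mathrm{Exp}(\lambda_0)$, independent of the i.i.d.\ clusters $\{C_{-i}\}$, and $\tilde R(n) = \sum_{i=1}^n (K_{-i} - D_i)$ is a genuine random walk, $K_{-i}$ being the total service of $C_{-i}$. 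Therefore $\exp(\eta\tilde R(n)) = \prod_{i=1}^n \exp(\eta K_{-i})\exp(-\eta D_i)$ factorizes over i.i.d.\ summands whose common $\mathbb P$-mean is $\exp(\psi_K(\eta))\cdot \lambda_0/(\lambda_0+\eta)$, finite for $0 \le \eta < \theta_2$ by part (1); so the likelihood ratio $\exp(\eta\tilde R(n))/E_{\mathbb P}[\exp(\eta\tilde R(n))]$ is a $\mathbb P$-martingale, $\mathbb Q$ is well defined, and under $\mathbb Q$ the pairs $(C_{-i}, D_i)$ remain i.i.d.\ with $C_{-i}$ and $D_i$ tilted independently by $\exp(\eta K - \psi_K(\eta))$ and $\exp(-\eta D)$ respectively.

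It remains to identify the two tilted marginals. Tilting $\mathrm{Exp}(\lambda_0)$ by $e^{-\eta D}$ gives $\mathrm{Exp}(\lambda_0+\eta)$, so the departure times — and, reversing the displacement, the cluster arrival times — form a Poisson process of rate $\lambda_0 + \eta$. For the cluster I would write its law as the product of the Galton--Watson tree-shape weight $e^{-h_1|C|}h_1^{|C|-1}\prod_v (c(v)!)^{-1}$ (with $c(v)$ the number of children of vertex $v$), the birth-time densities $\prod_{v\neq\mathrm{root}}f(b_v)$, and the service densities $\prod_v g(V_v)$, then distribute the tilt $e^{\eta K - \psi_K(\eta)} = e^{-\psi_K(\eta)}\prod_v e^{\eta V_v}$ onto the service factors, converting each $g(V_v)e^{\eta V_v}$ into $g_\eta(V_v)e^{\psi_V(\eta)}$ with $g_\eta(t) = g(t)e^{\eta t - \psi_V(\eta)}$. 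Absorbing the $|C|$ factors of $e^{\psi_V(\eta)}$ into the tree-shape weight and using the self-consistent relation $\psi_K(\eta) = \psi_V(\eta) + h_1(e^{\psi_K(\eta)} - 1)$ for the c.g.f.\ of $K$ (which follows, exactly as in Proposition \ref{prop: B}, from $K = V^1 + \sum_j K_j$ with $K_j$ i.i.d.\ copies of $K$ over a Poisson($h_1$) number of subtrees), the tree-shape weight becomes $e^{-\mu|C|}\mu^{|C|-1}\prod_v(c(v)!)^{-1}$ with $\mu = h_1 e^{\psi_K(\eta)}$, the birth densities remain $f$, and the service densities are now $g_\eta$ — i.e.\ precisely the law of a Hawkes cluster with parameters $(h_1 e^{\psi_K(\eta)}, f(\cdot))$ and service density $g_\eta$; shrinking $\theta_2$ so that $h_1 e^{\psi_K(\eta)} < 1$ on $[0,\theta_2)$ keeps it subcritical. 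Combining the rate-$(\lambda_0+\eta)$ cluster arrival process with these cluster parameters yields the stated description of $\mathbb Q$. I expect this last computation to be the main obstacle: one must write the cluster density carefully and recognize that the self-consistent c.g.f.\ equation for $K$ is exactly what turns the service-side exponential tilt into the offspring-mean change $h_1 \mapsto h_1 e^{\psi_K(\eta)}$; the supporting displacement-theorem fact (independence of departure times from clusters) is easy to overlook but is what makes the martingale/factorization argument legitimate.
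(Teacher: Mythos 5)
Your proof is correct, and it takes a genuinely different route from the paper's. The paper's proof of both halves works entirely at the level of cumulant generating functions: for part (1) it invokes the argument of Proposition \ref{prop: B}(2) to see that $\tilde R(1)=K-\tau_{-1}$ is a compound sum with a well-defined c.g.f., and for part (2) it derives the self-consistent relation $\psi_K(\theta)=\psi_V(\theta)+h_1(e^{\psi_K(\theta)}-1)$, shifts $\theta\mapsto\theta+\eta$, and observes that $\psi_{R,\eta}$ coincides with the c.g.f. of $\tilde R(1)$ generated by the claimed new parameter set, citing the standard exponential-family tilting theory. Your argument instead works at the level of the joint density of the cluster: you write the law of a cluster explicitly as a Galton--Watson tree-shape weight $e^{-h_1|C|}h_1^{|C|-1}\prod_v(c(v)!)^{-1}$ times products of birth-time and service densities, distribute the tilt $e^{\eta K-\psi_K(\eta)}$ onto the service factors, and use the same self-consistent relation to reabsorb the resulting $e^{|C|\psi_V(\eta)}$ into an offspring-mean change $h_1\mapsto h_1e^{\psi_K(\eta)}$. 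This is more work, but it buys something real: it identifies the \emph{full} law of $\{(C_m,\tau_m)\}$ under $\mathbb Q$, whereas the paper's computation, read literally, only matches the marginal c.g.f. of the scalar $\tilde R(1)$ and then appeals to the compound-sum tilting theory to upgrade that to a statement about the cluster distribution. Your observation that the departure process is Poisson($\lambda_0$) independent of the attached i.i.d.\ clusters, which makes $\tilde R(n)$ a bona fide random walk and lets the likelihood ratio factor over i.i.d.\ terms, is also stated in the paper (in the ``Reversing the Time'' discussion) but is used more carefully here. For part (1), your Borel-distribution tail estimate is a valid substitute for the paper's implicit-function argument, though the latter is shorter and matches the style of Proposition \ref{prop: B}.
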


The proof of Proposition \ref{prop: tilting} is given in Appendix \ref{apdx: proof tilting}. As a direct consequence of Proposition \ref{prop: tilting}, Assumptions A1) to A3) in \cite{BlanchetChen_MOR_2019} are satisfied and then, we can directly apply their algorithm to implement Step 4 of Algorithm \ref{alg: hawkes Q}. We refer the readers to \cite{BlanchetChen_MOR_2019} for more details and close this section by the second main result of this paper, which is a theoretical guarantee on the correctness and efficiency of Algorithm \ref{alg: hawkes Q}.
\begin{thm}\label{thm2}
	The output of Algorithm \ref{alg: hawkes Q} follows exactly the stationary distribution of virtual waiting time of the Hawkes/GI/1 queue. Besides, suppose $\bar{N}$ is the total number of random variables generated by Algorithm 2 before termination. Then, there exists $\delta>0$ such that $E[\exp(\delta N)]<\infty$.
\end{thm}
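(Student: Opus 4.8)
\noindent\textbf{Proof proposal for Theorem \ref{thm2}.} I would organize the proof around the observation that the exponential moment bound, once established, already forces $\bar N<\infty$ almost surely, hence a.s.\ termination of Algorithm \ref{alg: hawkes Q}; so it suffices to prove (i) that all objects the algorithm manipulates are \emph{exact}, (ii) that at the pair $(m_1,m_2)$ at which the algorithm returns, the hypotheses of Corollary \ref{crll1} hold, whence the returned value $\max_{0\le k\le k_{m_2}}R(k)$ equals $\max_{k\ge 0}R(k)$ and is therefore distributed as $W_\infty$ by \eqref{eq: W infty}, and (iii) the bound $E[\exp(\delta\bar N)]<\infty$ itself (here $\bar N$ is the number of random variables generated, as in the statement).

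For (i): Theorem \ref{thm1} says Step 1 returns $N_0$ with its exact law, and combining this with the time-reversal procedure described after Theorem \ref{thm1} -- thin the cluster departure times into a rate-$\lambda_0$ Poisson process on $[-t,0]$ and attach i.i.d.\ clusters adjusted to end at the prescribed epochs -- yields an exact stationary Hawkes path backward in time, hence exact $R(\cdot)$, $\tilde R(\cdot)$ and $J(\cdot)$. For (ii): hypothesis (a) of Corollary \ref{crll1} is exactly the stopping rule of Step 3, and the first $m_2$ satisfying it is a.s.\ finite because $\tilde R$ is a random walk with strictly negative drift (Proposition \ref{prop: random walk}(1)) and thus a.s.\ drops below the a.s.-finite level $-J(m_1)$. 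Hypothesis (b) holds precisely on the event $\{B=0\}$ that triggers the return: since the clusters indexed by departure order are i.i.d., conditionally on the history up to cluster $C_{-m_2}$ the increments $(\tilde R(m)-\tilde R(m_2))_{m\ge m_2}$ are independent of that history and distributed as $(\tilde R(m))_{m\ge 0}$, so $\max_{m\ge m_2}\tilde R(m)-\tilde R(m_2)\stackrel{d}{=}\max_{m\ge 0}\tilde R(m)$; by Proposition \ref{prop: tilting}, Assumptions A1)--A3) of \cite{BlanchetChen_MOR_2019} hold for this walk, so the Step-4 subroutine from that paper correctly draws $B=1\big(\max_{m\ge m_2}\tilde R(m)-\tilde R(m_2)>M\big)$ and, on $\{B=1\}$, returns a path of $\tilde R$ past $m_2$ with the correct conditional law up to the first exceedance time $\Delta$, beyond which $\tilde R$ continues as a fresh copy. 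Consequently, when the algorithm returns, both (a) and (b) hold, and Corollary \ref{crll1} gives $\max_{0\le k\le k_{m_2}}R(k)=\max_{k\ge 0}R(k)\stackrel{d}{=}W_\infty$.

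For (iii) I would decompose $\bar N=\bar N_0+\sum_{j=1}^{J^\ast}\bar N_j$, where $\bar N_0$ is the cost of the call to Algorithm \ref{alg: hawkes}, $J^\ast$ is the number of passes through the loop, and $\bar N_j$ is the work in pass $j$ (generating the clusters and departure times over $[m_1,m_2]$, the service times needed to evaluate $R(\cdot)$ on $[k_{m_1},k_{m_2}]$, and the Step-4 subroutine), and then show each ingredient is exponentially integrable. The quantity $\bar N_0$ is a Poisson sum -- with the finite mean appearing in Step 2 of Algorithm \ref{alg: hawkes} -- of total progenies of subcritical branching processes with Poisson-type offspring; such total progenies, hence such Poisson sums, have c.g.f.\ finite near the origin, which strengthens the finite-mean statement \eqref{eq: complexity}. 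For the loop, Proposition \ref{prop: tilting}(1) gives that the increments of $\tilde R$ have c.g.f.\ finite near $0$ and negative mean, so: first-passage times of $\tilde R$ below a level have exponential moments (up to an exponential factor in the level), $\sup_{m\ge 0}\tilde R(m)$ and hence $W_\infty$ have exponential moments, the $J(m_1)$ are light-tailed functionals of the finitely many clusters active at a departure epoch, $k_{m_2}-k_{m_1}$ is a light-tailed stopped sum of i.i.d.\ cluster sizes, and the Step-4 subroutine is exponentially integrable by the complexity analysis of \cite{BlanchetChen_MOR_2019}. Finally, because the clusters ordered by departure time regenerate, $J^\ast$ is stochastically dominated by a geometric random variable (morally the number of ascending ladder epochs of $\tilde R$) and the pairs (pass length, pass cost) by an i.i.d.\ light-tailed sequence, so a Wald/Chernoff argument -- bounding $E[\exp(\delta\bar N)]$ by a geometric series that converges for $\delta>0$ small -- yields the claim.

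I expect the main obstacle to be this last assembly: $J^\ast$, the pass lengths and the pass costs are all dependent through the common backward Hawkes path, so one must carefully exhibit the regeneration (or dominating i.i.d.-block) structure that legitimizes the geometric compounding; this is the Hawkes-dependent analogue of the complexity estimate in \cite{BlanchetChen_MOR_2019}, and the real content is checking that Proposition \ref{prop: tilting} supplies exactly the inputs their argument needs. A lesser, but genuine, technical point is upgrading the cost of Algorithm \ref{alg: hawkes} from the finite mean \eqref{eq: complexity} to an exponential moment, which rests on the exponential integrability of the total progeny of a subcritical branching process.
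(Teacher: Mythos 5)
Your proposal is correct and takes essentially the same approach as the paper, whose proof is a one-line deferral to Proposition 3 and Theorem 1 of \cite{BlanchetChen_2015}: your write-up faithfully unpacks what those cited results encode, namely correctness via Corollary \ref{crll1} together with the regeneration of the departure-ordered clusters, and efficiency via exponential moments of the dominating random walk's first-passage and overshoot quantities. The dependence you flag among $J^\ast$, the pass lengths, and the pass costs is precisely the technical content the paper delegates to the cited reference, so your caution is well-placed and the paper defers exactly where you do.
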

\begin{proof}[Proof of Theorem \ref{thm2}] The proof follows directly from proposition 3 and theorem 1 in \cite{BlanchetChen_2015}.\end{proof}
\section{Numerical Experiments}\label{sec: numerical}
We implement Algorithm \ref{alg: hawkes} and Algorithm \ref{alg: hawkes Q} in Python to test the performance and correctness of our perfect sampling algorithms. As an example of algorithm application, we investigate the effect of self-excitement on the steady-state waiting time distribution numerically using our perfect sampling algorithms.\\

\textbf{Algorithm \ref{alg: hawkes} Performance Test }We consider a Hawkes process with parameters $\lambda_0=1$, $h_1= 0.5$, $f(t) = 2\exp(-2t)$. Then, the stationary intensity rate of this Hawkes process is $\lambda_0/(1-h_1)=2$. To test the correctness of our algorithm, we apply Algorithm \ref{alg: hawkes} to simulate a stationary Hawkes process on time interval $[0,1]$. If our algorithm is correct, the average number of events generated on this time interval should equal to $E[N(1)]=2$, regardless of the choice of algorithm parameter $\eta$. To illustrate algorithm efficiency, we also record the number of random variables generated in each simulation round.  In Table \ref{tb: Hawkes}, we report, for different values of $\eta$, the 95\% confidence interval for $E[N(1)]$, along with the average number of random variables generated, based on 10000 rounds of simulation.
\begin{table}[h]
	\caption{95\% confidence interval for $E[N(1)]$ and expected number of random variables generated in one simulation round, estimated from 10000 i.i.d. sample path of Hawkes process on $[0,1]$ simulated by Algorithm \ref{alg: hawkes} with different $\eta$. }
	\label{tb: Hawkes}
	\centering
	\begin{tabular}{|c|c|c|}
		\hline
		$\eta$ & 95\%  Confidence Interval & \# random variables\\
		\hline
		0.05& $2.0079\pm 0.0397$&64.0009\\
		0.1& $2.0006\pm 0.0403$&34.7817\\
		0.15&$1.9713\pm 0.0395$&25.1570\\
		0.2&$1.9911\pm 0.0392$&21.6582\\
		0.35&$1.9950\pm 0.0406$&120.3092\\
		\hline
	\end{tabular}
\end{table}

\textbf{Algorithm \ref{alg: hawkes Q} Performance Test } We consider a Haweks/GI/1 queue in which customers arrive according to a Hawkes process with parameters $\lambda_0=1$, $h_1= 0.5$, $f(t) = 2\exp(-2t)$, and the service times are i.i.d. exponential with mean $1/3$. Since there is no theoretical benchmark to compare with, we shall compare the empirical distribution of the samples generated by Algorithm \ref{alg: hawkes Q} with that of the samples generated by simulating Hawkes/GI/1 for a long time such that the system is close to its steady sate. In figure \ref{fig: hist}, we plot the histograms of 10000 i.i.d. samples generated by Algorithm \ref{alg: hawkes Q} and 10000 i.i.d. samples of $W(100)$ obtained by simulating the single-server queue from empty state, i.e. $W(0)=0$. When simulating $W(100)$, to mitigate the transient bias caused by Hawkes arrivals, we feed in the single-sever queue with stationary sample paths of Hawkes process generated by Algorithm \ref{alg: hawkes}.

\begin{figure}[h]	
	\caption{Comparison of the estimated distributions of $W_\infty$ from 10000 i.i.d. samples generated by perfect sampling Algorithm \ref{alg: hawkes Q} and  long-term simulation respectively.}
	\label{fig: hist}
	\centering
	\includegraphics[scale = 0.7]{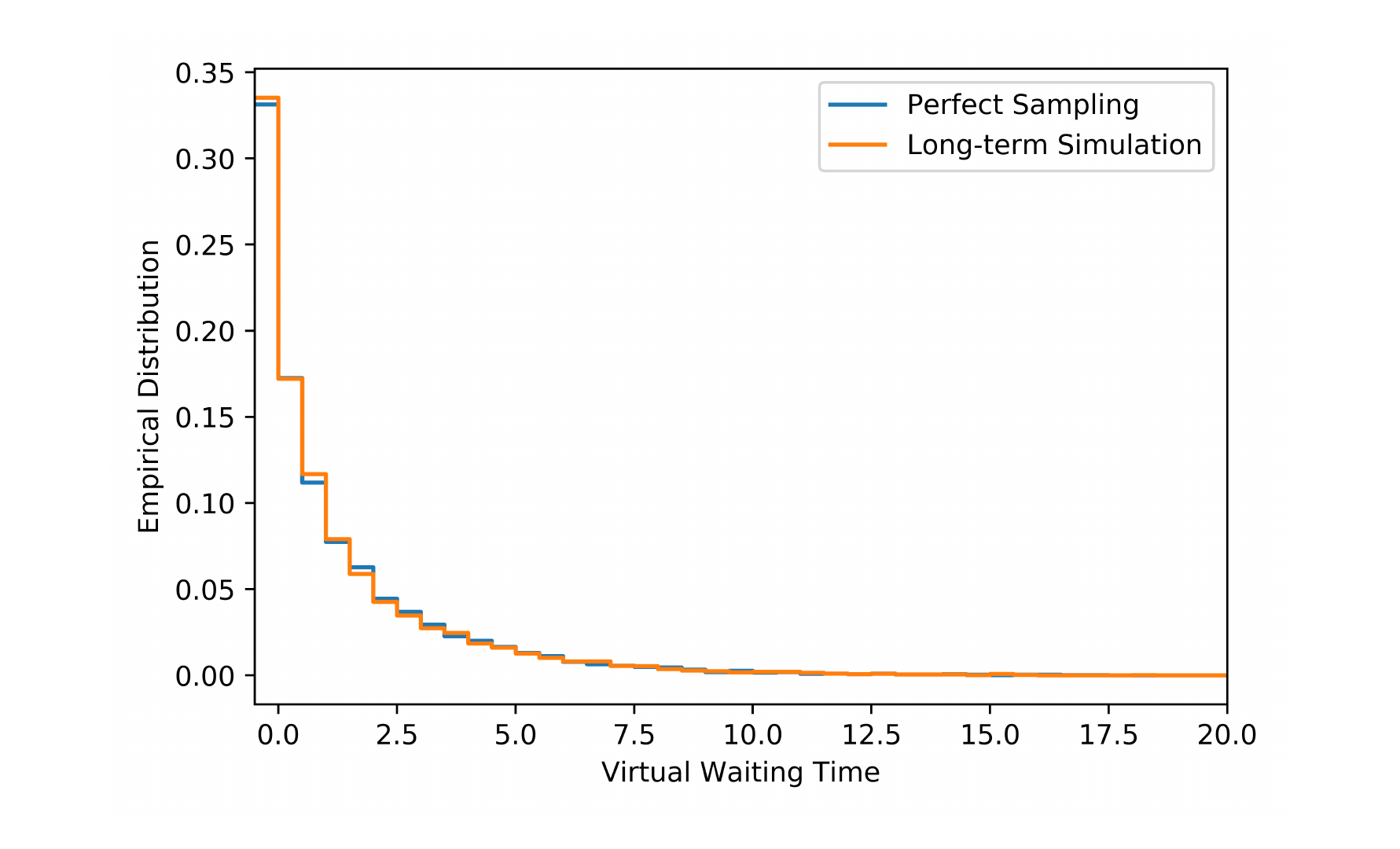}
\end{figure}

In order to compare the efficiency of our algorithm with naive simulation, in each simulation round of Algorithm \ref{alg: hawkes Q}, we also record $T_{ps}=-\tau_{-m_2+\Delta}$ which is minus of the departure time of the last cluster generated by the algorithm before termination. Basically, $T_{ps}$ is length of the sample path generated by Algorithm \ref{alg: hawkes Q} and we call $T_{ps}$ the PS sample path length.  In this light, $E[{T}_{ps}]$ can be used as a measurement for the computational cost of Algorithm \ref{alg: hawkes Q}. To evaluate the efficiency of naive simulation, we estimate the mixing time of $Hawkes/GI/1$ queue by comparing the estimated value of $E[W(T)]$ by naive simulation, for $T= 5, 10,..., 100,$ with that of $E[W_\infty]$ by Algorithm \ref{alg: hawkes Q}, using 10000 simulation rounds respectively. Figure \ref{fig: mixing} indicates that $E[W(T)]$ is not close to $E[W_\infty]$ for $T< 40$ while the sample average of PS sample path length, $\bar{T}_{ps} \approx 28.4936$ (the vertical dashed line). The numerical results indicate that, at least in this particular example,  the expected sample path length generated by Algorithm \ref{alg: hawkes Q} is smaller than the minimum sample path length with which a naive simulation could approximate the steady state expectation with good accuracy.

\begin{figure}[h]
	\caption{(a) Comparison between the mixing time of Hawkes/GI/1 queue and the average sample path length generated by Algorithm \ref{alg: hawkes Q}. (b) Empirical distribution of sample path length generated by Algorithm \ref{alg: hawkes Q}. }
	\centering
	\subfigure[mixing time v.s. PS sample path length ]{
		\includegraphics[scale=0.45]{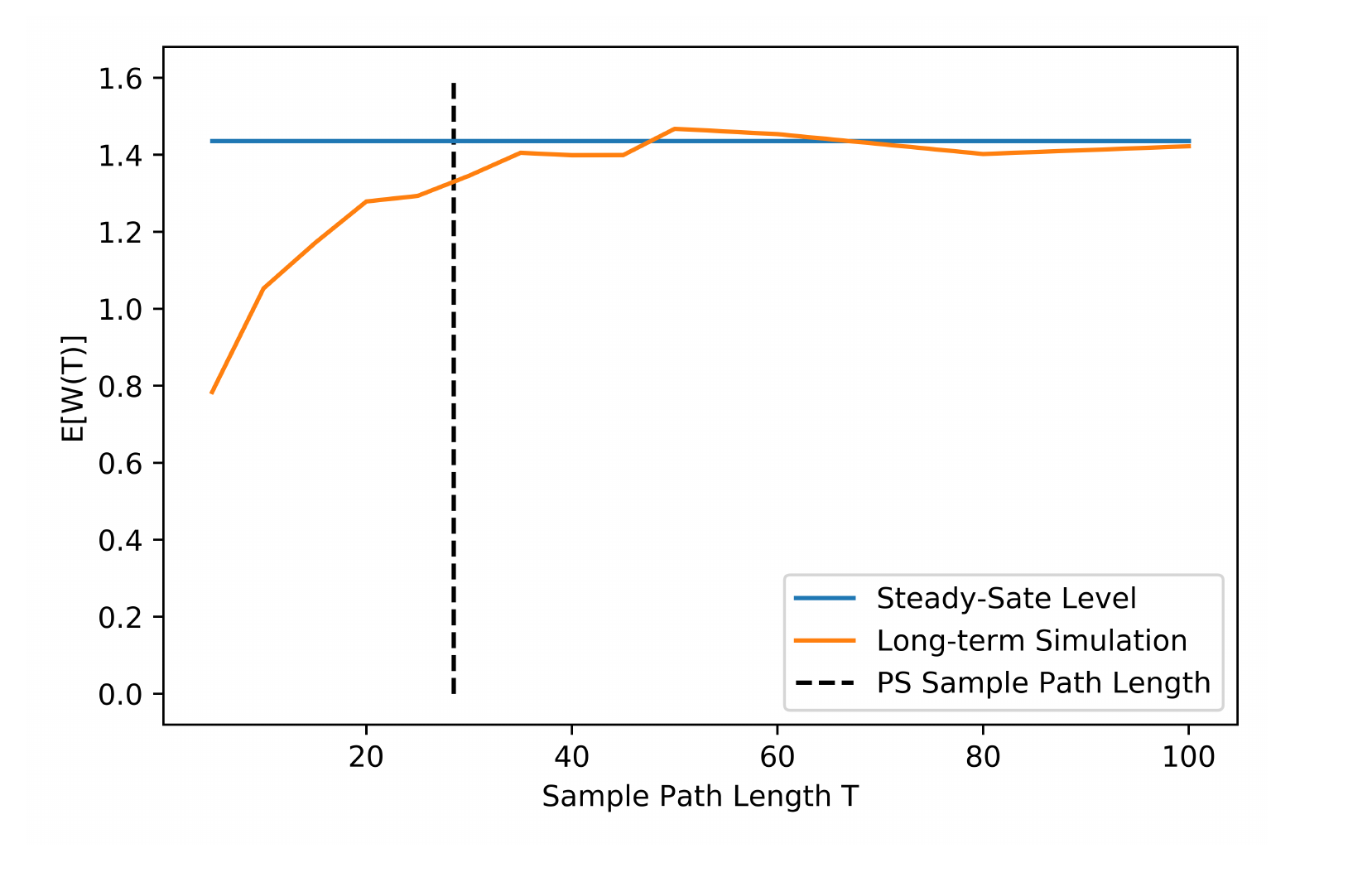}
		\label{fig: mixing}
	}
	\subfigure[PS sample path length distribution ]{
		\includegraphics[scale = 0.45]{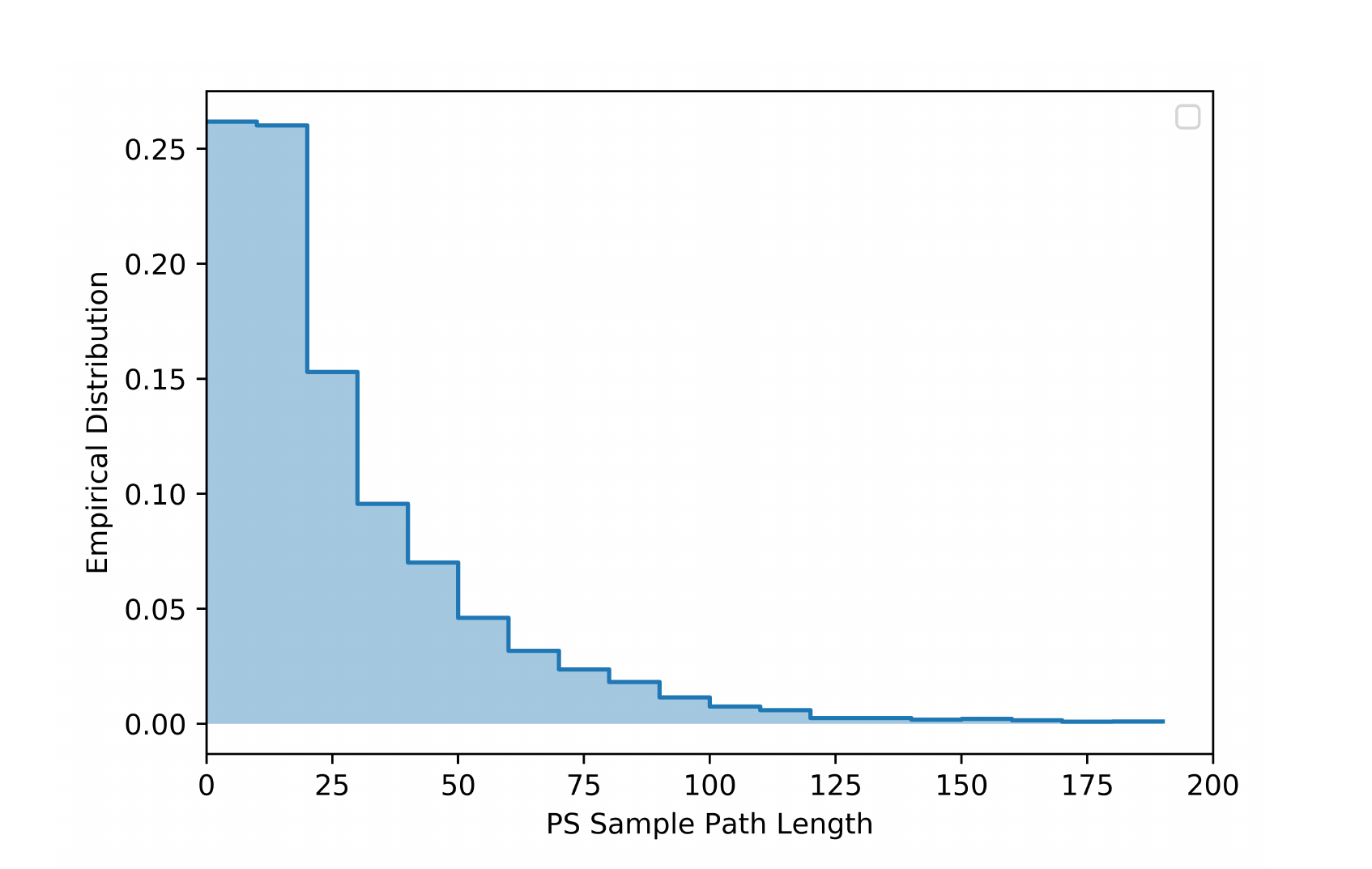}
		\label{fig: PT}
	}
\end{figure}

We conjecture that, this is because, the perfect sampling algorithm can terminate much earlier before the time point $T$ when the system get close to the steady state on average, say $T=40$, if the simulated sample path coalesces with the steady state, i.e. when a Bernoulli $B=0$ is detected in Step 4 of Algorithm \ref{alg: hawkes Q}, very soon. We plot the empirical distribution of $T_{ps}$ in Figure \ref{fig: PT} and find that more than 50\%  of the sample paths are shorter than 20. On the other hand, a significant proportion (over 10\%) are longer than 50. Intuitively, these sample paths could contribute to the average transient bias of $E[W(T)]$ for all $T< 50$ and thus affect the accuracy of naive simulation with fixed $T<50$.\\

\textbf{Impact of Self-excitement on Waiting Time }
Now we apply our simulation algorithms to estimate the impact of self-excitement on the steady-state distribution of waiting times. In particular, we investigate a set of Hawkes/GI/1 queues with equal stationary customer arrival rate and same service time distribution,  but different levels of self-excitement in customer arrivals. To see the impact of self-excitement on the steady-state distribution of the virtual waiting time,  we apply Algorithm \ref{alg: hawkes Q} to estimate $E[W_\infty]$ and $Var(W_\infty)$.

In detail, we consider 5 Hawkes/GI/1 queues indexed by $i=1,2,..,5$. The parameter set of the Hawkes process in queue $i$ is $(\lambda_0^i, h_1^i, f^i(\cdot))$. We set $f^i(t)=2\exp(-2t)$ for all $i$ and $h_1^i\in \{0.3,0.4,0.5,0.6,0.7\}$. For each $i$, $\lambda_0^i$ satisfies $\lambda_0^i/(1-h_0^i)\equiv 2$ so that all 5 Hawkes/GI/1 queues have equal stationary customer arrival rate. We also assume that the service times are exponential with rate $\mu=3$ in all queues. For each queue, we run 10000 rounds of simulation using Algorithm \ref{alg: hawkes Q} and report the estimated $E[W_\infty]$ and $Var(W_\infty)$ in Table \ref{tab: 5Q}. From the simulation results, we can see that self-excitement behavior of customer arrivals could increase not only the mean of waiting time but also its level of dispersion measured by the variance-to-mean ratio.
\begin{table}[h]
	\caption{Estimated mean, variance and VMR (variance-to-mean ratio) of steady-state virtual waiting time for Hawkes/GI/1 queues with different level of self-excitement.}
	\label{tab: 5Q}
	\centering
	\begin{tabular}{|c|ccccc|}
		\hline
		$h_1$& 0.7&0.6&0.5&0.4&0.3\\
		\hline
		$E[W_\infty]$&3.4388&2.0635&1.4356&1.1018&0.9287\\
		$Var[W_\infty]$&34.8779&11.2050&5.0345&2.6553&1.9101\\
		VMR&10.1425&5.4301&3.5069&2.4100&2.0567\\
		\hline
	\end{tabular}
\end{table}
\section{Conclusion}\label{sec: conclusion}
In this paper, we develop a simulation algorithm to generate i.i.d. samples exactly from the steady state of   $Hawkes/GI/1$ queues. To the best of our knowledge, this is the first perfect sampling algorithm for queueing models with arrivals that have self-exciting behavior. As a key component fo the algorithm, we also develop a new perfect sampling algorithm for Hawkes process that is much more efficient compared to existing algorithm in the literature. Both algorithms utilize importance sampling techniques on the cluster representation of Hawkes process. This approach, we believe, can probably be extended to other classes of counting processes with cluster representation, such as multi-dimensional Hawkes process with mutual-excitement and Poisson cluster processes. Besides, as single-server queues are the basic building blocks of more complicated queueing models, our approach can probably lead to perfect sampling methods for other queueing models with self-exciting arrivals.


\appendix
\section{Proof of Proposition \ref{prop: B}}\label{apdx: proof B}
We prove the three statements of Proposition \ref{prop: B} one by one.

(1). To see $B_m\geq L_m$, probably the most straightforward way is to represent the cluster as a tree. Let the immigrant event $\tau_m$ be the root node and link each event $t_m^k$ to its parent event by an edge of length $b_m^k$. Then, by their definitions, $B_m$ is equal to the total length of all edges in the tree while $L_m$ is equal to the length of the longest path(s) from the root node to a leaf node. Therefore, $B_m\geq L_m$.

(2). Recall that $B_m=\sum_{k=2}^{|C_m|} b_m^k$ and $b_{m}^k$ are i.i.d. for given $|C_m|$. Let $S_m=|C_m|$. It is known in literature (\cite{Bordenave2007}) that the c.g.f. $\psi_S(\theta)$ of $S_m$ is well-defined in a neighborhood around $0$. Under Assumption \ref{asp: light tail}, the c.g.f. of  each $b_m^k$ is well-defined on $[0,\theta_0]$. Since $B_m$ is the compound sum of $(S_m-1)$ i.i.d. birth times, $\psi_B(\theta)$ is also well-defined in a neighborhood around $0$. 

To obtain \eqref{eq: B cgf}, recall that $\tau_m=t_m^1$ is the immigrant event and suppose its next-generation events are  $t_m^2$, $t_m^3$,..., $t_m^{\Lambda+1}$. Then, $\Lambda$ is a Poisson r.v. with mean $h_1$. By the self-similarity of branching process, the total birth time $B_m$ equals to $\sum_{k=1}^{\Lambda} (t_m^{k+1}-t_m^1)$ plus i.i.d. copies of total birth times $B^{(k)}_m$ of the ``sub-clusters" brought by the next-generation events $t_m^{k+1}$ for $k=1,...,\Lambda$. Therefore, for any $0\leq \theta<\theta_1$, we have
\begin{equation*}\label{eq: prop B}
\begin{aligned}
\psi_B(\theta) &= \log(E[\exp(\theta B)]) = \log\left(E\left[\exp\left(\theta\sum_{k=1}^{\Lambda} \left(t_m^{k+1}-t_m^1 +B^{(k)}_m\right)\right)\right]\right)\\
&= \log\left(E\left[E\left[\exp\left(\theta\sum_{k=1}^{\Lambda} \left(b_m^{k+1} +B^{(k)}_m\right)\right)| \Lambda \right]\right]\right)\\
&= \log\left(E\left[\exp\left( \Lambda(\psi_f(\theta) + \psi_B(\theta) \right))\right]\right)\\
&=h_1 \exp(\psi_f(\theta)+\psi_B(\theta))-h_1,
\end{aligned}
\end{equation*}
the last equality follows from the fact that $\Lambda$ is a Poisson r.v. with mean $h_1$.

(3). Under $\mathbb{Q}$, the c.g.f. of $B$ becomes $\psi_{B,\eta}(\theta)=\psi_B(\theta+\eta)-\psi_B(\eta)$. We can compute,
\begin{equation*}
\begin{aligned}
\psi_{B,\eta}(\theta)&=\psi_B(\theta+\eta)-\psi_B(\eta) =  h_1 \exp(\psi_f(\theta+\eta)+\psi_B(\theta+\eta))-h_1 \exp(\psi_f(\eta)+\psi_B(\eta))\\
&= h_1 \exp(\psi_f(\eta)+\psi_B(\eta)) (\exp(\psi_{f,\eta}(\theta)+\psi_{B,\eta}(\theta))-1),
\end{aligned}
\end{equation*}
with $\psi_{f,\eta}(\theta)=\psi_f(\eta+\theta)-\psi_f(\eta)$ be the c.g.f.  corresponding to probability density function $f_\eta(\cdot)$. The above calculation shows that $\psi_{B,\eta}(\theta)$ equals exactly to the c.g.f. of the total birth time  of a cluster with parameter $( h_1 \exp(\psi_f(\eta)+\psi_B(\eta)), f_\eta(\cdot))$. Therefore, the cluster under $\mathbb{Q}$ is equal in distribution to a cluster with parameter $( h_1 \exp(\psi_f(\eta)+\psi_B(\eta)), f_\eta(\cdot))$ (see Chapter 5.1 of \cite{SimulationBook}).

\section{Proof of Proposition \ref{prop: random walk}}\label{apdx: proof random walk}
By definition, the increment of $\tilde{R}(m)$ equals to the total service requirement in one cluster, minus the inter-departure time of clusters, i.e. an exponential random variable with rate $\lambda_0$. Therefore,
$$E[\tilde{R}(m)] = m\left(E[V]E[|C_m|]-\frac{1}{\lambda_0}\right)<0,$$
where the last inequality follows from Assumption \ref{asp: stable}. On the other hand,
\begin{equation*}
\begin{aligned}
R(k) -R(k_{m_1}) &=\sum_{j=k_{m_1}+1}^{k} V_{-j} - \sum_{j=k_{m_1}+1}^k U_{-j}\\
&=\sum_{n\geq -m_2} \sum_{k=1}^{|C_n|}V_n^k1(A_k\leq t_n^k< \delta_{-m_1})- \sum_{j=k_m+1}^k U_{-j}\\
&=\sum_{n\geq -m_1} \sum_{k=1}^{|C_n|}V_n^k1(A_k\leq t_n^k< \delta_{-m_1}) +\sum_{n=-m_1-1}^{-m_2}\sum_{k=1}^{|C_n|}V_n^k1(A_k\leq t_n^k)- \sum_{j=k_m+1}^k U_{-j}\\
&\leq \sum_{n\geq-m_1} \sum_{k=1}^{|C_n|}V_n^k1(t_n^k< \delta_{-m_1}) +\sum_{n=-m_1-1}^{-m_2}\sum_{k=1}^{|C_n|}V_n^k - \sum_{j=k_{m_1}+1}^{k_{m_2}} U_{-j}\\
&= J(m_1)  + \tilde{R}(m_2)-\tilde{R}(m_1 )
\end{aligned}
\end{equation*}

\section{Proof of Proposition \ref{prop: tilting}}\label{apdx: proof tilting}
The first statement follows a similar argument in the proof of Statement (2) of Proposition \ref{prop: B} as $\tilde{R}(1) =\sum_{k=1}^{|C_{-1}|}V_{-1}^k-\tau_{-1}$ is a compound sum of i.i.d. service times minus an independent exponential random variable. 

The second statement also follows a similar argument as used in the proof of Statement (3) of Proposition \ref{prop: B} by computing c.g.f. of the exponential tilting. In particular, define $\psi_R(\theta)=\log(E[\exp(\tilde{R}(1)\theta)])$ and $\psi_{R,\eta}(\theta) = \psi_R(\theta+\eta)-\psi_R(\eta)$. Recall that $K= \sum_{k=1}^{|C_{-1}|}V_{-1}^k$, then
$$\psi_R(\theta) = \log(E[\exp(\theta\tilde{R}(1))])= \psi_K(\theta) - \log\left(\frac{\lambda_0}{\lambda_0+\theta}\right).$$ 
Let $K^{(k)}$ be i.i.d. copies of $K$ for $k\geq 1$ and $\Lambda$ be the number of next-generation events generated by the immigrant. Then, by self-similarity of the branching process,
\begin{equation*}
\begin{aligned}
\psi_K(\theta)&= \log\left(E\left[\exp\left(\theta V_{-1}^1+\theta\sum_{k=1}^{\Lambda}  K^{(k)}\right)\right]\right) \\
&=\psi_V(\theta) + h_1(\exp(\psi_k(\theta))-1),
\end{aligned}
\end{equation*}
where $\psi_V$ is the c.g.f. of the service time. Therefore,
\begin{equation*}
\begin{aligned}
\psi_{K,\eta}(\theta)&=\psi_K(\theta+\eta)-\psi_K(\eta) \\
& = \psi_V(\theta+\eta)-\psi_V(\eta) + h_1\exp(\psi_K(\eta))(\exp(\psi_{K}(\theta+\eta)-\psi_K(\eta))-1)\\
& = \psi_{V,\eta}(\theta) + h_1\exp(\psi_K(\eta))(\exp(\psi_{K,\eta}(\theta))-1)
\end{aligned}
\end{equation*}
and as a result,
$$\psi_{R,\eta}(\theta)= \psi_{V,\eta}(\theta) + h_1\exp(\psi_K(\eta))(\exp(\psi_{K,\eta}(\theta))-1)-\log\left(\frac{\lambda_0+\eta}{\lambda_0+\eta+\theta}\right).$$
In other words, the $\psi_{R,\eta}$ equals to the c.g.f. of $\tilde{R}(1)$ generated by Hawkes clusters with arrival rate $\lambda_0+\eta$, branching parameter $h_1\exp(\psi_K(\eta))$ and  service distribution $g_{\eta}(t) = g(t)\exp(\eta t-\psi_V(\eta))$.

\end{document}